
\documentclass[final]{siamltex}


\usepackage{latexsym, enumerate}
\usepackage{eepic}
\usepackage{epic}
\usepackage{graphicx}
\usepackage{color}
\usepackage{ifpdf}
\usepackage{amssymb,amsmath,epsfig, algorithm,algorithmicx,multirow}
\usepackage{amsfonts, dsfont}
\usepackage{subfigure}



\newcommand {\aux}{\text{aux}}
\newcommand {\f}{\text{frac}}

\title{A Constraint energy minimizing generalized multiscale finite element method for parabolic equations
\thanks{Eric Chung's work is partially supported by Hong Kong RGC General Research Fund (Project 14304217)
and CUHK Direct Grant for Research 2017-18. L. Jiang acknowledges  the support of Chinese NSF 11471107.
}}


\author{Mengnan Li\thanks{College of Mathematics and Econometrics, Hunan University, Changsha 410082, China ({\tt mengnanli@hnu.edu.cn}).}
\and
Eric Chung\thanks{Department of Mathematics, The Chinese University of Hong Kong, Hong Kong SAR. ({\tt tschung@math.cuhk.edu.hk}).}
\and
Lijian Jiang\thanks{School of Mathematical Sciences,  Tongji University, Shanghai 200092, China. ({\tt  ljjiang@tongji.edu.cn}).Corresponding author}
}

\begin{document}

\maketitle

\begin{abstract}
In this paper, we present a Constraint Energy Minimizing Generalized Multiscale Finite Element Method (CEM-GMsFEM) for parabolic equations with multiscale coefficients, arising from applications in  porous media. We will present the construction of  CEM-GMsFEM  and rigorously analyze its convergence for  the parabolic equations.
The convergence rate is characterized  by the coarse grid size and the eigenvalue decay of local spectral problems, but is independent of the scale length  and contrast of the media.
The analysis shows  that the method has a first order convergence rate with respect to coarse grid size in the energy norm and second order convergence rate with respect to coarse grid size in  $L^2$ norm
under some appropriate assumptions. For the temporal discretization, finite difference techniques are used  and the convergence analysis of full discrete scheme is given.
Moreover, a posteriori error estimator  is derived and analyzed. A few  numerical results for porous media applications  are presented   to
confirm the theoretical findings and demonstrate the performance of the approach.

\end{abstract}

\begin{keywords}
 multiscale  parabolic equations,  CEM-GMsFEM,  high-contrast porous media, a posteriori error estimator
\end{keywords}

\begin{AMS}
  65N99, 65N30, 34E13
\end{AMS}

\pagestyle{myheadings}

\thispagestyle{plain}
\markboth{M. Li,  E. Chung and L. Jiang}{CEM-GMsFEM for parabolic equations}

\section{Introduction}
The multiscale characteristic of formation properties pose significant challenges for subsurface flow modeling. Geological characterizations that capture these effects are typically developed at scales that are too fine for direct flow simulation, so reduced models are necessary to compute the solution of flow problems in practice. There are a number of methods that have been developed to solve multiscale problems. The upscaling method \cite{upsca2,upsca4} is one of the classical approaches, which is based on the homogenization theory (e.g., \cite{homo1,homo2,homo3,para1}). The key to upsacling techniques is to form a coarse-scale equation using pre-computed effective coefficients \cite{weh02}. The multiscale finite element method (MsFEM) \cite{hw97} share similarities with upscaling methods, but the coarse-scale equations are obtained through variational formulation  by multiscale basis functions, which contain fine scale information. Some other multiscale methods such as variational multiscale methods \cite{Hughes98, ar02} and multi-scale finite volume method \cite{fvm1} share the model reduction techniques but using different frameworks to solve multiscale problems.

Recently a multiscale method \cite{Eric2017CEM} for which the basis functions are constructed by the principle of constraint energy minimization is proposed. The approach shares some ideas of the generalized multiscale finite element method (GMsFEM) \cite{egh13,ccj16, GMs3}, which is a systematic way to find the multiscale basis functions. In particular, the GMsFEM constructs a suitable local snapshot space and then solve a spectral problem in each coarse block. The basis functions are the dominant eigenvectors corresponding to small eigenvalues. The GMsFEM's convergence depends on the eigenvalue decay of the local spectral problems \cite{egh13}. To obtain a convergence depending on the coarse  mesh, a new methodology, called constraint energy minimizing GMsFEM (CEM-GMsFEM),
is proposed in \cite{Eric2017CEM}. The idea of CEM-GMsFEM can be divided in two steps. First, this method is to construct auxiliary basis functions via local spectral problems. Then,  constraint energy minimization problems are  solved to obtain the required basis functions, where the constraints are related to the auxiliary basis functions. Combining the eigenfunctions and the energy minimizing properties, the basis functions are shown to have exponential decay away from the target coarse block, and thus the basis functions can be computed locally. In addition, the convergence depends on the coarse mesh size when the oversampling domain is  carefully chosen. Also, the size of oversampling domain depends weakly on the contrast of the media.

 There are some multiscale methods to solve the parabolic equations with multiscale coefficients \cite{para2,para1, finitv1,oz07}. These methods can get accurate solutions in some cases, but one may need to use multiple  multiscale basis functions which can be computed locally in order to accurately capture complex multiscale features. The aim of this paper is to solve these problems by  CEM-GMsFEM and carry out convergence analysis for parabolic equations with high-contrast coefficients. The multiscale basis functions allow a spatial decay and can be used in models with the similar multiscale coefficients and different source terms, boundary conditions and initial conditions. The convergence analysis of the semidiscrete formulation is made first. We use an  elliptic spectral  projection as the  bridge between the multiscale finite element space and the continuous space. Then, we  show that the method is first order convergence rate  with respect to coarse grid size  in the energy norm, and second order convergence rate with respect coarse grid size  in the $L^2$ norm, under some mild assumptions on the solutions of parabolic equations. In addition, we also consider the fully discrete scheme, and prove its stability and convergence. Furthermore, we  derive an a posteriori error bound for the CEM-GMsFEM scheme, which can be served
as a foundation for adaptive enrichment schemes \cite{posterior3,posterior2,posterior4,posterior5,posterior1}. We prove that the error between the multiscale finite element  solution and the fine-scale solution is bounded by a local error indicator.

   We consider two versions of  CEM-GMsFEM. The first one is based on the constraint energy minimization and the second one is based on the relax version by solving unconstrainted energy minimization. We present the numerical results using the heterogeneous permeability fields with channels and inclusions. By selecting the number of basis functions and oversampling layers properly, the numerical results can verify our theoretical estimates and convergence rate. In subsurface  problems, the model with a fracture structure  is of importance. The permeability in  fractures is very different from that in the surrounding matrix.
   And the discrete fracture model (DFM)  is one of the important fracture models and has  been extensively  studied  \cite{frac1,frac3,frac2}. In the numerical result section, we apply the CEM-GMsFEM  to DFM in a single phase flow.  The numerical results of a posteriori error bound is also given and show the robustness of the  a posteriori error estimator.

   This paper is organized as follows. In Section~2, we will give some preliminaries. In Section~3, basis functions of  CEM-GMsFEM are  presented. In Section~4, we make   the convergence analysis  of the semi-discrete formulation. In Section~5, the analysis of the  fully discrete scheme is presented. In Section~6, a few numerical results are presented to confirm the convergence  analysis. Finally, some conclusions are  given.

\section{Preliminaries}

Let $\Omega$ be a bounded computational domain in $R^{d}$ ($d=2,3$) and $T>0$ be a fixed time. We consider the following model parabolic equation
\begin{equation}\label{ex-eq}
  \left\{
 \begin{aligned}
 u_{t} - \nabla \cdot \big(\kappa(x) \nabla u\big)&=f(x,t) \quad in\quad \Omega\times (0,T], \\
 u&=0 \quad on\quad \partial\Omega\times (0,T],\\
 u(\cdot,0)&=u_{0}(x) \quad in\quad \Omega.
 \end{aligned}
   \right.
 \end{equation}
  We assume that multiscale coefficient $\kappa(x)$ is uniformly positive and bound in $\Omega$, i.e., there exists $0< \kappa_0 < \kappa_1<\infty$ such that $\kappa_0\leq\kappa(x)\leq \kappa_1$, but the ratio $\kappa_1/\kappa_0$ can be large.  This parabolic model is often used in the subsurface flow modeling. We give  the notions of fine grid and coarse grid for the domain $\Omega$. Let $\mathrm{H}$ be the coarse-mesh size and  $\mathcal{T}^{H}$ a finite element conforming coarse partition of the domain $\Omega$.
  Each coarse grid block in $\mathcal{T}^{H}$ is a connected union of fine grid blocks in the fine partition  $\tau^{h}$, where $h$ denotes the fine grid size. Let $N_{c}$ be the number of coarse vertices and $N$
   the number of elements of $\mathcal{T}^{H}$. Let  $\{x_{i}\}_{i=1}^{N_{c}}$ be  the set of  vertices in $\mathcal{T}^{H}$ and $\omega_{i}=\bigcup\{K_{j}\in \mathcal{T}^{H}|x_{i}\in\overline{K_{j}}\}$
    the neighborhood of the node $x_{i}$.  Figure \ref{grid1} illustrates  the fine grid, coarse grid $K_i$, the neighborhood $\omega_{i}$ and  oversampling domain $K_{i,1}$ obtained using one coarse element layer extension.
In this paper, we use the usual  notations (e.g., $H^1(\Omega)$) for standard  Sobolev spaces.  We also use the conventional   notations  such as $L^2(0, T; H^1(\Omega)$ for space-time dependent Sobolev spaces.
For simplicity of notations, we will suppress the time variable $t$ and the space variable $x$ in functions  when no ambiguity occurs.

\begin{figure}
  \centering
  \includegraphics[width=6in]{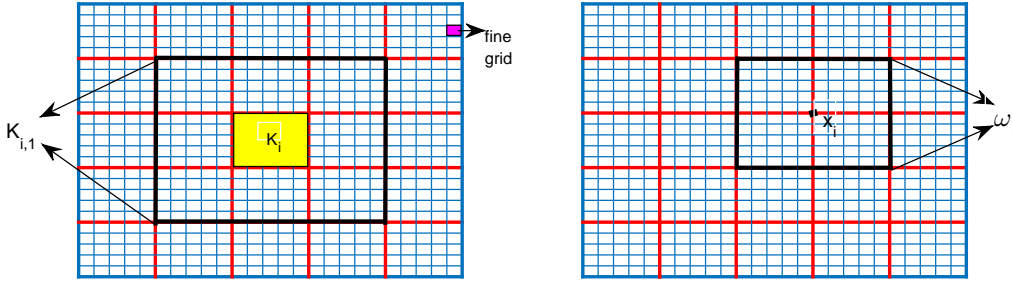}
  \caption{The fine grid, coarse grid $K_{i}$, oversampling domain $K_{i,1}$ and the neighborhood $\omega_{i}$ of the node $x_{i}$}\label{grid1}
\end{figure}

Let $V:=H^1(\Omega)$ and  $V_{0}:=H^1_{0}(\Omega)$.
The weak formulation  of (\ref{ex-eq}) is to  find $u(\cdot,t) \in V_{0}$ such that
\begin{equation}\label{eq-weak}
\left\{
 \begin{aligned}
(u_t,v) + a(u,v)& = (f,v) \quad \forall v\in V_{0}, \, t> 0\\
\big(u(\cdot,0), v\big)&=(u_{0},v) \quad \forall v\in V_{0},
\end{aligned}
   \right.
\end{equation}
where
\[
a(u,v) = \int_{\Omega} \kappa \nabla u \cdot \nabla v, \quad (f,v)=\int_{\Omega}fv.
\]
The aim of this paper is to construct a multiscale method for (\ref{ex-eq}). To this end, we will construct a finite dimensional  multiscale space $V_{ms} \subset V_{0}$,
whose dimension is small. Then we will find the multiscale solution $u(\cdot,t)\in V_{ms}$ by solving
\begin{equation}\label{eq:scheme}
\left\{
 \begin{aligned}
\big((u_{ms})_t,v\big) + a(u_{ms},v)& = (f,v) \quad \forall v\in V_{ms},\, t> 0\\
\big(u_{ms}(\cdot,0), v\big)&=(u_{0},v) \quad \forall v\in V_{ms}.
\end{aligned}
   \right.
\end{equation}
We denote $u_{0,ms}\in V_{ms}$ as the $L^2$ projection of $u_0$ onto  $V_{ms}$.

\section{Construction of CEM-GMsFEM basis functions}

In this section, we briefly describe the construction of CEM-GMsFEM basis functions. This process can be divided into two stages. The first stage is the construction of the auxiliary space by solving  a local spectral problem in each coarse element $K$. The second stage   is to construct the multiscale basis functions by solving some local constraint energy minimization problems in oversampling domains.

\subsection{Auxiliary basis function}
 In this subsection, we present how to construct the auxiliary space. Let  $V(K_{i})=H^{1}(K_{i})$ for a generic coarse block $K_i$. We  solve the following spectral problem in each coarse element $K_{i}$: find
 eigen-pairs $\{\lambda_{j}^{(i)},  \phi_{j}^{(i)}\}$  such that
\[
a_{i}(\phi_{j}^{(i)},v) =\lambda_{j}^{(i)}s_{i}(\phi_{j}^{(i)},v)\quad \forall v\in V(K_{i}),
\]
where
\[
a_{i}(u,v) = \int_{K_{i}} \kappa \nabla u \cdot \nabla v, \quad s_{i}(u,v)=\int_{K_{i}}\widetilde{\kappa}uv.
\]
 Here $\widetilde{\kappa}=\kappa\sum_{j=1}^{N_{c}}|\nabla\chi_{j}|^{2}$ and $\{\chi_{j}\}$ is a set of partition of unity functions for the coarse partition $\mathcal{T}^{H}$. We assume that the eigenfunctions satisfy the normalized condition $s_{i}(\phi_{j}^{(i)},\phi_{j}^{(i)})=1$. Let the eigenvalues $\lambda_{j}^{(i)}$ be arranged in ascending order, i.e., $\lambda_{1}^{(i)}\leq \lambda_{2}^{(i)}\leq \cdots$. We define the local auxiliary multiscale space $V_{aux}^{(i)}$ by using the first $L_{i}$ eigenfunctions
 \[
 V_{\aux}^{(i)}=\text{span} \{\phi_{j}^{(i)}|1\leq j\leq L_{i}\}.
 \]
 Then the global auxiliary space $V_{\aux}$ is defined by using these local auxiliary spaces, i.e.,
 \[
 V_{\aux}=\bigoplus_{i=1}^{N}V_{\aux}^{(i)}.
\]
To construct the CEM-GMsFEM basis functions, we need the following definition.
\begin{definition}\label{defn1}
($\phi_{j}^{(i)}$-orthogonality) Given a function $\phi_{j}^{(i)}\in V_{\aux}$, if a function $\psi\in V$ satisfies
\[
s(\psi,\phi_{j}^{(i)}) = 1,\quad s(\psi,\phi_{j'}^{(i')}) = 0\quad if \ j'\neq j \ or \  i'\neq i,
\]
then we say that  $\psi$ is $\phi_{j}^{(i)}$-orthogonal. Where $s(u,v)=\sum_{i=1}^{N}s_{i}(u,v)$.
\end{definition}

We define an  operator $\pi:V\rightarrow V_{\aux}$ by
\[
\pi(v) = \sum_{i=1}^{N}\sum_{j=1}^{L_{i}}s_{i}(v,\phi_{j}^{(i)})\phi_{j}^{(i)},\quad\forall v\in V.
\]
The null space of the operator $\pi$ is defined by  $\widetilde{V}=\{v\in V|\pi(v)=0\}$.
\subsection{Multiscale basis functions in CEM-GMsFEM}

 Now we present the construction of the multiscale basis functions. Given a coarse block  $K_i$, we denote the oversampling  region $K_{i,m}\subset \Omega$ obtained by enlarging  $K_{i}$ with $m$ coarse grid layers (see Figure \ref{grid1}). Let $V_{0}(K_{i,m}):=H_0^1(K_{i,m})$.   Then, we define the multiscale basis function $\psi_{j,ms}^{(i)}\in V_{0}(K_{i,m})$ by
\begin{equation}\label{min}
\psi_{j,ms}^{(i)}=\text{argmin}\big\{a(\psi,\psi)|\psi\in V_{0}(K_{i,m}), \psi \ is \ \phi_{j}^{(i)}\text{-orthogonal}\big\}.
\end{equation}
Then the CEM-GMsFEM space is defined by
\[
V_{ms}=\text{span}\big\{\psi_{j,ms}^{(i)}|1\leq j\leq L_{i},1\leq i\leq N\big\}.
\]
By using Lagrange Multiplier, the problem (\ref{min}) can be rewritten  as the following problem:
find $\psi_{j,ms}^{(i)}\in V_{0}(K_{i,m}),\quad \lambda \in V_{\aux}^{(i)}$ such that
\[
\left\{
 \begin{aligned}
 a(\psi_{j,ms}^{(i)},\omega)+s(\omega,\lambda)& = 0 \quad \forall \omega\in V_{0}(K_{i,m}), \\
s(\psi_{j,ms}^{(i)}-\phi_{j}^{(i)},\nu)&=0  \quad \forall \nu\in V_{\aux}^{(i)}(K_{i,m}),
\end{aligned}
   \right.
\]
where $V_{\aux}^{(i)}(K_{i,m})$ is the union of all local auxiliary spaces for $K_j \subset K_{i,m}$.

 By \cite{Eric2017CEM}, we can relax the $\phi_{j}^{(i)}$-orthogonality in (\ref{min}) and get  a relaxed CEM-GMsFEM. This can be achieved by solving the unconstrained minimization problem: find $\psi_{j,rms}^{(i)}\in V_{0}(K_{i,m})$ such that
 \begin{equation}\label{uncon1}
   \psi_{j,rms}^{(i)}=\text{argmin}\big\{a(\psi,\psi)+s(\pi\psi-\phi_{j}^{(i)},\pi\psi-\phi_{j}^{(i)})|\ \psi\in V_{0}(K_{i,m})\big\}.
 \end{equation}
 Then  the resulting  multiscale finite element  space is defined by
 \[
 V_{rms}=\big\{\psi_{j,rms}^{(i)}|1\leq j\leq L_{i}, 1\leq i\leq N\big\}.
 \]
 We note that (\ref{uncon1}) is equivalent to
 \[
 a(\psi_{j,rms}^{(i)},\omega)+s(\pi\psi_{j,rms}^{(i)}-\phi_{j}^{(i)},\pi\omega)=0,\ \forall \ \omega\in V_{0}(K_{i,m}).
 \]
\section{The Semidiscrete formulation}
In this section, we approximate the solution $u(x,t)$ of the parabolic problem (\ref{eq-weak}) by a function $u_{ms}$  which belongs to the space $V_{ms}$ for each $t>0$.
 The solution $u_{ms}$ solves equation (\ref{eq:scheme}).     We define the following  norms for our  analysis:
\[
\|u\|_{a}^{2}:=\int_{\Omega}\kappa|\nabla u|^{2}, \quad
\|u\|_{s}^{2}:=\int_{\Omega}\widetilde{\kappa}|u|^{2}.
\]
\subsection{Error estimates}
 In this subsection, we will give  the error estimates of the proposed method.
 To this end,  we need  some preliminary lemmas.

Let $\widehat{u}\in V_{ms}$ be the elliptic projection of the function $u\in V$, i.e., $\widehat{u}$ satisfies
\[
a(u-\widehat{u},v)=0 \ \forall v\in V_{ms}.
\]
The following lemma gives an estimate between the solution of elliptic equation and its elliptic projection.
\begin{lemma}\label{lem12} \cite{Eric2017CEM}. Let $u$ be the solution of the elliptic equation
 \begin{equation}\label{elliptic}
 \left\{
 \begin{aligned}
-div(\kappa\nabla u)&=f \quad \text{in} \; \Omega\\
u&=0 \quad \text{on} \;\partial\Omega,
\end{aligned}
   \right.
\end{equation}
and $\widehat{u}\in V_{ms}$ be the elliptic projection of the $u$ in the subspace of $V_{ms}$.
If $\{\chi_{j}\}$ are the bilinear partition of unity functions and $m=O(\log(\frac{\kappa_1}{H\kappa_0}))$, where $m$ is the number of oversampling layers in the construction of basis in (\ref{min}). Then
\begin{equation}\label{elli1}
  \|u-\widehat{u}\|_{a}\leq CH\Lambda^{-\frac{1}{2}}\|\kappa^{-\frac{1}{2}}f\|,
\end{equation}
where $\Lambda=\min_{1\leq i\leq N}\lambda_{L_{i}+1}^{(i)}$.
\end{lemma}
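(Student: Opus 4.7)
The plan is to combine Galerkin orthogonality for the elliptic projection with an intermediate ``global'' CEM-GMsFEM approximation. Since $\widehat{u}$ minimizes $\|u-w\|_a$ over $w\in V_{ms}$, it suffices to exhibit one candidate $w^{\ast}\in V_{ms}$ for which the desired bound holds. I would introduce a global multiscale space $V_{\mathrm{glo}}$ defined exactly as $V_{ms}$ but with the oversampling region $K_{i,m}$ replaced by all of $\Omega$ in (\ref{min}), and let $u_{\mathrm{glo}}\in V_{\mathrm{glo}}$ be the $a$-projection of $u$. The proof then splits as $\|u-\widehat{u}\|_a\leq \|u-u_{\mathrm{glo}}\|_a+\|u_{\mathrm{glo}}-w^{\ast}\|_a$ and each piece is estimated separately.

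For the first piece, I would use the Lagrangian first-order optimality condition for the constrained minimization to show that $V_{\mathrm{glo}}$ is $a$-orthogonal to $\widetilde{V}$, so $\tilde{u}:=u-u_{\mathrm{glo}}\in\widetilde{V}$. Testing the elliptic equation against $\tilde{u}$ then gives $\|\tilde{u}\|_a^2=(f,\tilde{u})$. Because $\pi(\tilde{u})=0$, the Rayleigh-quotient characterization of the local spectral problem in $V_{\aux}$ yields the key spectral bound $\|\tilde{u}\|_s^2\leq \Lambda^{-1}\|\tilde{u}\|_a^2$. A weighted Cauchy--Schwarz $(f,\tilde{u})\leq \|\widetilde{\kappa}^{-1/2}f\|\,\|\tilde{u}\|_s$, combined with the partition-of-unity estimate $|\nabla\chi_j|\leq C/H$ (which gives $\|\widetilde{\kappa}^{-1/2}f\|\leq CH\|\kappa^{-1/2}f\|$), then produces $\|u-u_{\mathrm{glo}}\|_a\leq CH\Lambda^{-1/2}\|\kappa^{-1/2}f\|$.

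For the second piece, I would exploit that each global basis $\psi_j^{(i)}$ decays exponentially away from the target block $K_i$. A Caccioppoli-type energy estimate on concentric coarse-layer annuli, with cutoff functions built from the partition of unity and driven by the constrained minimizing property, gives $\|\psi_j^{(i)}-\psi_{j,ms}^{(i)}\|_a\leq C\rho^{m}$ with a geometric rate $\rho\in(0,1)$ that depends only on the contrast $\kappa_1/\kappa_0$. Selecting $m=O(\log(\kappa_1/(H\kappa_0)))$ forces $\rho^{m}\leq CH$, and aggregating the localization errors over the basis expansion of $u_{\mathrm{glo}}$ (using the bounded overlap of oversampling regions and the stability of the global expansion coefficients in the $s$-norm) yields $\|u_{\mathrm{glo}}-w^{\ast}\|_a\leq CH\Lambda^{-1/2}\|\kappa^{-1/2}f\|$. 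A triangle inequality together with Galerkin orthogonality then delivers (\ref{elli1}).

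The main obstacle is the exponential-decay step: producing a contrast-robust rate $\rho$ and controlling the cumulative localization error so that the logarithmic choice of $m$ absorbs the contrast dependence. This rests on the inf-sup compatibility of $(\widetilde{V},V_{\aux})$ under the $s$-inner product and a careful telescoping of the energy identity across the oversampling layers, which is the technical core of the CEM-GMsFEM convergence theory.
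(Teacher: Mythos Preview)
The paper does not supply its own proof of this lemma; it is quoted directly from \cite{Eric2017CEM} and used as a black box. Your outline is precisely the strategy of that reference: introduce the global CEM space $V_{\mathrm{glo}}$, use the $a$-orthogonal decomposition $V=V_{\mathrm{glo}}\oplus\widetilde V$ to bound $\|u-u_{\mathrm{glo}}\|_a$ via the spectral gap $\Lambda$ and the $\widetilde\kappa$-weighted Cauchy--Schwarz inequality, and then control the localization error by the exponential decay of the global basis functions together with the choice $m=O(\log(\kappa_1/(H\kappa_0)))$. So your proposal is correct and coincides with the argument behind the cited result; there is nothing further to compare within the present paper.
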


The following lemma gives a regularity estimate for the solution of the parabolic equation (\ref{ex-eq}).
\begin{lemma}\label{lem14}
 Let $u$ be the solution of the parabolic equation (\ref{ex-eq}). Then
\begin{equation}\label{ut}
  \|u_{t}\|^{2}_{L^2(0,T; L^2(\Omega))}\leq C\bigg(\|u_{0}\|_a^{2}+\|f\|^{2}_{L^2(0,T; L^2(\Omega))} \bigg).
\end{equation}
\begin{proof}
By (\ref{ex-eq}), we have
\[
(u_{t},u_{t}) + a(u,u_t) =(f,u_{t}),
\]
which implies
\[
\|u_t\|^2 + \frac{1}{2} \frac{d}{dt} \|u\|_a^2 = (f,u_t).
\]
Integrating with respect to  time, we have
\[
\int_0^T \|u_t\|^2 + \frac{1}{2} \|u\|_a^2 = \int_0^T (f,u_t) + \frac{1}{2} \|u_0\|_a^2.
\]
This completes the proof.
\end{proof}
\end{lemma}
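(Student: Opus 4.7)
The plan is to use the standard energy method for parabolic equations, testing the weak formulation with the velocity $u_t$ itself. Concretely, I would take $v = u_t$ in the weak form \eqref{eq-weak}, which is admissible provided $u_t(\cdot,t) \in V_0$ for almost every $t$. This yields the identity
\[
(u_t, u_t) + a(u, u_t) = (f, u_t).
\]
Because the coefficient $\kappa$ does not depend on time, the symmetric bilinear form $a$ commutes with the time derivative in the sense that $a(u, u_t) = \tfrac{1}{2}\tfrac{d}{dt} a(u,u) = \tfrac{1}{2}\tfrac{d}{dt}\|u\|_a^2$. Substituting gives $\|u_t\|^2 + \tfrac{1}{2}\tfrac{d}{dt}\|u\|_a^2 = (f, u_t)$.

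Next, I would bound the right-hand side by Cauchy--Schwarz followed by Young's inequality, e.g.\ $(f,u_t) \le \tfrac{1}{2}\|f\|^2 + \tfrac{1}{2}\|u_t\|^2$, and absorb the $\|u_t\|^2$ term into the left-hand side. This produces the differential inequality
\[
\tfrac{1}{2}\|u_t\|^2 + \tfrac{1}{2}\tfrac{d}{dt}\|u\|_a^2 \le \tfrac{1}{2}\|f\|^2.
\]
Integrating in time from $0$ to $T$ and using the initial condition $u(\cdot,0)=u_0$ gives
\[
\int_0^T \|u_t\|^2 \,dt + \|u(\cdot,T)\|_a^2 \le \|u_0\|_a^2 + \int_0^T \|f\|^2 \,dt,
\]
and dropping the nonnegative boundary term $\|u(\cdot,T)\|_a^2$ yields the claimed estimate with $C = 1$.

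The main technical subtlety is not the algebra but the regularity assumption needed to justify choosing $v = u_t$ as a test function and pulling the time derivative through $a(\cdot,\cdot)$. Strictly speaking, this requires either assuming a priori that $u_t \in L^2(0,T;H^1_0(\Omega))$, or a standard approximation argument using Galerkin projections (or mollification in time) followed by passage to the limit. For the purposes of this paper, where the lemma is invoked as a regularity bound on the exact solution, I would simply state that the identity holds in the sense of distributions for sufficiently smooth data and proceed with the estimate; this is the standard convention in parabolic energy arguments and no deeper obstacle is present.
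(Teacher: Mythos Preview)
Your proof is correct and follows essentially the same approach as the paper: test the weak formulation with $v=u_t$, use $a(u,u_t)=\tfrac{1}{2}\tfrac{d}{dt}\|u\|_a^2$, and integrate in time. The only cosmetic difference is that you apply Young's inequality before integrating whereas the paper integrates first and leaves the Cauchy--Schwarz/Young step implicit; your added remark on the regularity needed to justify $v=u_t$ is a useful clarification the paper omits.
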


 To estimate  the error bound, the elliptic projection $\widehat{u}\in V_{ms}$ of the  solution $u$ plays an important role. The following lemma
gives the error estimate of $\widehat{u}(t)$ for the parabolic equation.
\begin{lemma}
\label{lem1}
 Let $u$ be the solution of (\ref{eq-weak}). For each $t>0$ , we define the elliptic projection $\widehat{u}(t)\in V_{ms}$ by
\begin{equation}\label{ms-ell}
a\big(u(t)-\widehat{u}(t),v\big) =0, \quad \forall v\in V_{ms}.
\end{equation}
 Then, for any $t>0$,
\begin{equation}\label{ms-a-err}
\| (u-\widehat{u})(t)\|_{a} \leq CH\Lambda^{-\frac{1}{2}}\|\kappa^{-\frac{1}{2}}(f-u_{t})(t)\|,
\end{equation}
\begin{equation}\label{ms-l2-err}
\|(u-\widehat{u})(t)\|\leq CH^{2}\Lambda^{-1} \kappa_0^{-\frac{1}{2}} \|\kappa^{-\frac{1}{2}}(f-u_{t})(t)\|,
\end{equation}
where $C$ is a constant independent of $\kappa$ and the mesh size $H$.
\end{lemma}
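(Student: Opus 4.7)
The proof naturally splits along the two estimates. The key observation is that once $t$ is fixed, the parabolic equation can be read as an elliptic equation for $u(\cdot,t)$: rewriting (\ref{ex-eq}) as $-\nabla\cdot(\kappa\nabla u(t)) = f(t)-u_t(t)$ with homogeneous Dirichlet data, the definition (\ref{ms-ell}) of $\widehat{u}(t)$ coincides with the elliptic projection considered in Lemma \ref{lem12} applied to source $f-u_t$. The energy-norm bound (\ref{ms-a-err}) therefore drops out immediately by invoking (\ref{elli1}) with $f$ replaced by $f(t)-u_t(t)$.

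For the $L^2$ estimate (\ref{ms-l2-err}) I would run an Aubin--Nitsche duality argument. Set $e(t):=u(t)-\widehat{u}(t)$ and introduce the auxiliary elliptic problem: find $w\in V_{0}$ such that
\[
-\nabla\cdot(\kappa\nabla w) = e \quad \text{in } \Omega, \qquad w=0 \quad \text{on }\partial\Omega.
\]
Let $\widehat{w}\in V_{ms}$ be its elliptic projection, so $a(w-\widehat{w},v)=0$ for every $v\in V_{ms}$. Testing the $w$-equation against $e$ and using the Galerkin orthogonality $a(e,\widehat{w})=0$ coming from (\ref{ms-ell}), I obtain
\[
\|e\|^{2} \;=\; a(w,e) \;=\; a(w-\widehat{w},\,e) \;\le\; \|w-\widehat{w}\|_{a}\,\|e\|_{a}.
\]

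Now I would apply Lemma \ref{lem12} to $w$ to get $\|w-\widehat{w}\|_{a}\le CH\Lambda^{-1/2}\|\kappa^{-1/2}e\|$, and control the weighted norm on the right by $\|\kappa^{-1/2}e\|\le \kappa_0^{-1/2}\|e\|$ using the lower bound $\kappa\ge\kappa_0$. Plugging this back yields
\[
\|e\| \;\le\; CH\Lambda^{-1/2}\kappa_0^{-1/2}\,\|e\|_{a}.
\]
Combining this with the already established energy bound (\ref{ms-a-err}) gives the desired $H^2$ rate
\[
\|e(t)\| \;\le\; CH^{2}\Lambda^{-1}\kappa_0^{-1/2}\,\|\kappa^{-1/2}(f-u_t)(t)\|.
\]

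The step I expect to require the most care is the duality argument: I need to ensure that Lemma \ref{lem12} is applicable to the dual solution $w$ (its source $e$ lies in $L^{2}$, which is exactly what the lemma needs) and that the weighted norm $\|\kappa^{-1/2}e\|$ is controlled by $\kappa_0^{-1/2}\|e\|$ with the correct power producing the factor $\kappa_0^{-1/2}$ that appears in (\ref{ms-l2-err}). The rest is a direct composition of the elliptic estimate with Galerkin orthogonality, so no further difficulty is anticipated.
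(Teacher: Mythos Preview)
Your proposal is correct and follows essentially the same route as the paper: both recognize that at each fixed $t$ the parabolic solution satisfies an elliptic problem with source $f-u_t$ so that Lemma~\ref{lem12} gives (\ref{ms-a-err}) directly, and both run the Aubin--Nitsche duality argument (with the dual source $e=u-\widehat{u}$ and the bound $\|\kappa^{-1/2}e\|\le\kappa_0^{-1/2}\|e\|$) to obtain (\ref{ms-l2-err}). No differences of substance.
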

\begin{proof}
Note that the solution $u\in V_{0}$ of (\ref{eq-weak}) satisfies
\[
a(u,v) = (f-u_{t},v), \quad \forall v\in V_{0},\ \forall t> 0.
\]
Thus, $\widehat{u}(t)\in V_{ms}$ satisfies
\[
a(\widehat{u},v) =a(u,v)= (f-u_{t},v), \quad \forall v\in V_{ms},\ \forall t> 0.
\]
By Lemma \ref{lem12}, we obtain
\[
\begin{split}
\| u-\widehat{u}\|_{a}&\leq CH\Lambda^{-\frac{1}{2}}\|\kappa^{-\frac{1}{2}}(f-u_{t})\|, \quad\forall t>0.
\end{split}
\]
Next, we derive the error estimate in the $L^{2}$ norm.
We will apply the Aubin-Nitsche lift technique.
For each $t>0$, we define $w\in V_{0}$ by
\[
a(w,v) = (u-\widehat{u}, v), \quad \forall v\in V_{0},
\]
and define $\widehat{w}$ as the elliptic projection of $w$ in the space $V_{ms}$, that is,
\[
a(\widehat{w},v) = (u-\widehat{u}, v), \quad \forall v\in V_{ms}.
\]
By Lemma \ref{lem12}, we obtain
\[
\begin{split}
\| u-\widehat{u}\|^2 & = a(w,u-\widehat{u})\\
&= a(w-\widehat{w},u-\widehat{u})\\
&\leq \|w-\widehat{w}\|_{a} \, \|u-\widehat{u}\|_{a}\\
& \leq C( H\Lambda^{-\frac{1}{2}}\max\{\kappa^{-\frac{1}{2}}\}\|u-\widehat{u}\| ) \, \big(H \Lambda^{-\frac{1}{2}} \|\kappa^{-\frac{1}{2}}(f-u_{t})\| \big)
\end{split}
\]
where the second equality is obtained by (\ref{ms-ell}).
Hence, we have
\[
\| u-\widehat{u}\| \leq CH^{2}\Lambda^{-1} \kappa_0^{-\frac{1}{2}}\|\kappa^{-\frac{1}{2}}(f-u_{t})\|, \quad\forall t>0.
\]
This completes the proof.
\end{proof}

In the following theorems, we will prove the error estimates in the  $L^{2}$ norm and energy norm.
We will first state and prove the estimate in energy norm.

\begin{theorem}
\label{thm2}
Let $u$ and $u_{ms}$ be the solution of (\ref{eq-weak}) and (\ref{eq:scheme}),  respectively. Then
\begin{equation}
\begin{split}
&\: \| (u - u_{ms})(\cdot,T) \|^{2} +\int_0^T \| u - u_{ms} \|_{a}^{2} \\
\leq &\: CH^{2}\Lambda^{-1} \kappa_0^{-\frac{1}{2}} \|\kappa^{-\frac{1}{2}}(f-u_{t})\|_{L^2(0,T;L^2(\Omega))} \bigg(\|u_{0}\|_a^{2}+\|u_{0,ms}\|_a^{2} + \|f\|^{2}_{L^2(0,T;L^2(\Omega))} \bigg)^{\frac{1}{2}} \\
\quad &\: + CH^2\Lambda^{-1}\|\kappa^{-\frac{1}{2}}(f-u_{t})\|_{L^2(0,T;L^2(\Omega))}^2
\end{split}
\end{equation}
where $C$ is a constant independent of $\kappa$ and the mesh size $H$.
\end{theorem}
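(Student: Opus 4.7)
The plan is to follow the classical elliptic-projection-based parabolic error analysis, adapted to the CEM-GMsFEM setting. First I would decompose the error as $u - u_{ms} = \rho + \theta$ where $\rho = u - \widehat{u}$ is the projection error (controlled directly by Lemma \ref{lem1}) and $\theta = \widehat{u} - u_{ms} \in V_{ms}$ is the discrete component. Subtracting (\ref{eq:scheme}) from (\ref{eq-weak}) tested against $v \in V_{ms}$ yields the Galerkin orthogonality $((u - u_{ms})_t, v) + a(u - u_{ms}, v) = 0$ for all $v \in V_{ms}$. Taking $v = \theta$ and using the defining property $a(\rho, v) = 0$ of the elliptic projection (\ref{ms-ell}) gives $((u-u_{ms})_t, \theta) + \|\theta\|_a^2 = 0$.

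The critical manipulation is to rewrite $\theta = (u - u_{ms}) - \rho$ in the first term, producing the identity
\[
\tfrac{1}{2} \tfrac{d}{dt}\|u - u_{ms}\|^2 + \|\theta\|_a^2 = \big((u-u_{ms})_t, \rho\big).
\]
This is the key trick: it avoids any appearance of $\rho_t$ (whose estimation would require $f_t, u_{tt}$), moving all time-regularity onto $(u - u_{ms})_t = u_t - (u_{ms})_t$ where Lemma \ref{lem14} is directly applicable. Integrating from $0$ to $T$ and applying Cauchy-Schwarz in space-time gives
\[
\int_0^T \big((u-u_{ms})_t, \rho\big) \, dt \leq \big(\|u_t\|_{L^2(L^2)} + \|(u_{ms})_t\|_{L^2(L^2)}\big)\, \|\rho\|_{L^2(0,T;L^2(\Omega))}.
\]
Lemma \ref{lem14} applied to both $u$ and $u_{ms}$ (the same energy argument works since $(u_{ms})_t \in V_{ms}$ is an admissible test function) bounds the first factor by $C(\|u_0\|_a^2 + \|u_{0,ms}\|_a^2 + \|f\|_{L^2(L^2)}^2)^{1/2}$, and the $L^2$ bound (\ref{ms-l2-err}) of Lemma \ref{lem1}, integrated in time, controls $\|\rho\|_{L^2(L^2)}$ by $CH^2 \Lambda^{-1} \kappa_0^{-1/2} \|\kappa^{-1/2}(f-u_t)\|_{L^2(L^2)}$. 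The product yields exactly the first term of the stated bound.

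For the $\int_0^T \|u-u_{ms}\|_a^2$ contribution I would use the triangle inequality $\|u-u_{ms}\|_a^2 \leq 2\|\rho\|_a^2 + 2\|\theta\|_a^2$; the $\rho$ part is bounded by (\ref{ms-a-err}) squared and integrated, producing the second term of the target bound, while the $\theta$ part is already controlled by the energy identity above. The initial-data term $\|(u - u_{ms})(0)\|^2 = \|u_0 - u_{0,ms}\|^2$ is bounded by $\|\rho(0)\|^2$ because $u_{0,ms}$ is the $L^2$-best approximation of $u_0$ in $V_{ms}$, and this contribution is of order $H^4$ and thus absorbable. The main obstacle I anticipate is bookkeeping of the two energy applications of Lemma \ref{lem14} (to $u$ and to $u_{ms}$) and verifying that the Galerkin orthogonality $a(\rho, \theta) = 0$ holds for the CEM-GMsFEM elliptic projection; both are routine once the decomposition $(e_t, \theta) = (e_t, e) - (e_t, \rho)$ is put in place.
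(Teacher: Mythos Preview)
Your proposal is correct and follows essentially the same route as the paper: both test the Galerkin orthogonality relation with $v=\widehat u-u_{ms}$, rewrite so that the time-derivative lands on $(u-u_{ms})_t$ (bounded via Lemma~\ref{lem14} for both $u$ and $u_{ms}$) while the spatial error sits on $\rho=u-\widehat u$ (bounded via Lemma~\ref{lem1}), and then integrate in time. The only cosmetic difference is that you invoke $a(\rho,\theta)=0$ explicitly to put $\|\theta\|_a^2$ on the left and later recover $\|u-u_{ms}\|_a^2$ by the triangle inequality, whereas the paper keeps $\|u-u_{ms}\|_a^2$ on the left from the start and handles the extra cross term $a(u-u_{ms},\rho)$ by Young's inequality; the two computations are algebraically equivalent.
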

\begin{proof}
By  (\ref{eq-weak}) and (\ref{eq:scheme}), we have
\[
((u-u_{ms})_t,v) + a(u-u_{ms},v) = 0
\]
for all $v\in V_{ms}$.
We define $\widehat{u}\in V_{ms}$ as the elliptic projection of $u$, which satisfies (\ref{ms-a-err}) and (\ref{ms-l2-err}). Taking $v=\widehat{u}-u_{ms}$,
we have the following  equation
\[
((u-u_{ms})_t,\widehat{u}-u_{ms}) + a(u-u_{ms},\widehat{u}-u_{ms}) = 0.
\]
This can be written as
\[
((u-u_{ms})_t,u-u_{ms}) + a(u-u_{ms},u-u_{ms}) =
((u-u_{ms})_t,u-\widehat{u}) + a(u-u_{ms},u-\widehat{u}).
\]
This implies 
\[
\frac{1}{2} \frac{d}{dt} \| u - u_{ms} \|^2 + \| u - u_{ms} \|_{a}^2
=  ((u-u_{ms})_t,u-\widehat{u}) + a(u-u_{ms},u-\widehat{u}).
\]
By  Cauchy-Schwarz inequality and Young's inequality, we have
\[
\begin{split}
\frac{1}{2} \frac{d}{dt} \| u - u_{ms} \|^2 + \| u - u_{ms} \|_{a}^2
&\leq  \|u_t-(u_{ms})_t\|\| u - \widehat{u}\|  + \| u-u_{ms}\|_{a} \|u-\widehat{u}\|_{a}\\
&\leq  ( \|u_t\|+\|(u_{ms})_t\|) \| u - \widehat{u}\|  +\frac{1}{2} \| u-u_{ms}\|_{a}^{2}+\frac{1}{2} \|u-\widehat{u}\|_{a}^{2}.
\end{split}
\]
So, we have
\[
\frac{d}{dt} \| u - u_{ms} \|^2 + \| u - u_{ms} \|_{a}^2\leq  2( \|u_t\|+\|(u_{ms})_t\|) \| u - \widehat{u}\|+\|u-\widehat{u}\|_{a}^{2}.
\]
Integrating with respect to  time, we have
\begin{equation*}
\begin{split}
&\:  \| (u - u_{ms})(\cdot,T) \|^2 -  \| (u - u_{ms})(\cdot,0) \|^2 + \int_0^T \| u - u_{ms} \|_{a}^2 \\
 \leq &\:  \int_0^T 2( \|u_t\|+\|(u_{ms})_t\|) \| u - \widehat{u}\|+ \int_0^T \|u-\widehat{u}\|_{a}^{2}.
\end{split}
\end{equation*}
By (\ref{ut}), we can bound $u_t$ by the initial data and source function $f$, that is
\begin{equation*}
 \int_0^T  \|u_{t}\|^{2}\leq C\bigg(\|u_{0}\|_a^{2}+\int_{0}^{T}\|f\|^{2} \bigg).
\end{equation*}
Similarly, by (\ref{eq:scheme}), we have
\begin{equation*}
 \int_0^T  \|(u_{ms})_t\|^{2}\leq C\bigg(\|u_{0,ms}\|_a^{2}+\int_{0}^{T}\|f\|^{2} \bigg).
\end{equation*}

Thus, (\ref{ms-a-err}), (\ref{ms-l2-err}) and  the above two inequalities give
\[
\begin{split}
&\: \| (u - u_{ms})(\cdot,T) \|^{2} +\int_0^T \| u - u_{ms} \|_{a}^{2} \\
\leq &\: CH^{2}\Lambda^{-1} \kappa_0^{-\frac{1}{2}} \|\kappa^{-\frac{1}{2}}(f-u_{t})\|_{L^2(0,T;L^2(\Omega))} \bigg(\|u_{0}\|_a^{2}+\|u_{0,ms}\|_a^{2} + \int_{0}^{T}\|f\|^{2} \bigg)^{\frac{1}{2}} \\
\quad &\: + CH^2\Lambda^{-1}\|\kappa^{-\frac{1}{2}}(f-u_{t})\|_{L^2(0,T;L^2(\Omega))}^2.
\end{split}
\]
This completes the proof.
\end{proof}

The following theorem gives the $L^2$ error estimate.
\begin{theorem}
\label{thm1}
 If $f_t \in L^1(0,T;L^2(\Omega))$ and $u_{tt} \in L^1(0,T;L^2(\Omega))$, then
\begin{equation}\label{th1}
\begin{split}
  \|(u-u_{ms})(T)\|&\leq CH^{2}\Lambda^{-1} \kappa_0^{-\frac{1}{2}} \bigg( \max_{0\leq t\leq T} \|\kappa^{-\frac{1}{2}}(f-u_{t})\|
  +\|\kappa^{-\frac{1}{2}}(f_{t}-u_{tt})\|_{L^1(0,T;L^2(\Omega))}\bigg) \\
  &\quad +\|u_{0}-u_{0,ms}\|,
   \end{split}
\end{equation}
where $C$ is a constant  independent of $\kappa$ and the mesh size $H$.
\end{theorem}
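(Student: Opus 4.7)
The plan is to follow the standard parabolic finite element error decomposition of Wheeler/Thomée, splitting
\[
u - u_{ms} = (u - \widehat{u}) + (\widehat{u} - u_{ms}) =: \rho + \theta,
\]
where $\widehat{u}(\cdot,t)\in V_{ms}$ is the elliptic projection from Lemma \ref{lem1}. For $\rho$, Lemma \ref{lem1} already gives the pointwise-in-time bound $\|\rho(t)\| \leq CH^2\Lambda^{-1}\kappa_0^{-1/2}\|\kappa^{-1/2}(f-u_t)(t)\|$, which at $t=T$ produces the first term on the right of the claimed estimate. The main work is to control $\|\theta(T)\|$.

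Subtracting (\ref{eq:scheme}) from (\ref{eq-weak}) for test functions $v\in V_{ms}$, and using $a(u-\widehat{u},v)=0$ for such $v$, I would derive the evolution equation
\[
(\theta_t,v) + a(\theta,v) = -(\rho_t,v) \quad\forall v\in V_{ms}.
\]
Choosing $v=\theta$, dropping the nonnegative term $\|\theta\|_a^2$, and using the elementary identity $\tfrac{1}{2}\tfrac{d}{dt}\|\theta\|^2 = \|\theta\|\tfrac{d}{dt}\|\theta\|$ together with Cauchy--Schwarz yields $\tfrac{d}{dt}\|\theta\| \leq \|\rho_t\|$. Integrating over $[0,T]$ gives
\[
\|\theta(T)\| \leq \|\theta(0)\| + \int_0^T \|\rho_t\|\,dt.
\]
The initial term is handled by the triangle inequality $\|\theta(0)\| \leq \|\rho(0)\| + \|u_0 - u_{0,ms}\|$, which contributes the $\|u_0 - u_{0,ms}\|$ on the right and another $\rho$-term absorbed into the $\max_{0\le t \le T}$.

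The main obstacle, and the step that justifies the hypothesis $f_t, u_{tt} \in L^1(0,T;L^2(\Omega))$ in the statement, is estimating $\|\rho_t\|$. For this I would differentiate in time the defining relation $a(\widehat{u},v)=a(u,v)$ for $v\in V_{ms}$ to see that $\widehat{u}_t$ is precisely the elliptic projection of $u_t$. Since $u_t$ solves the elliptic equation $-\nabla\!\cdot(\kappa\nabla u_t) = f_t - u_{tt}$ (obtained by differentiating (\ref{ex-eq}) in time), Lemma \ref{lem1} applied with source $f_t - u_{tt}$ yields
\[
\|\rho_t(t)\| = \|u_t - \widehat{u}_t\|(t) \leq CH^2\Lambda^{-1}\kappa_0^{-1/2}\|\kappa^{-1/2}(f_t - u_{tt})(t)\|.
\]
Integrating in $t$ produces the $L^1(0,T;L^2(\Omega))$ norm appearing in (\ref{th1}). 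Combining the bounds on $\rho(T)$, $\rho(0)$, $\int_0^T\|\rho_t\|$, and $\|u_0-u_{0,ms}\|$ via the triangle inequality $\|u-u_{ms}\|(T) \leq \|\rho(T)\| + \|\theta(T)\|$ gives the stated estimate. The only subtlety is verifying that the differentiated elliptic problem is compatible with applying Lemma \ref{lem1} componentwise, which is exactly what the regularity hypotheses $f_t, u_{tt} \in L^1(0,T;L^2(\Omega))$ ensure.
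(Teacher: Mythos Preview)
Your proposal is correct and follows essentially the same argument as the paper: the identical Wheeler/Thom\'ee splitting into elliptic-projection error plus discrete evolution error (the paper merely swaps the names $\theta$ and $\rho$), the same test-with-$\theta$, drop-$\|\theta\|_a^2$, integrate-$\tfrac{d}{dt}\|\theta\|$ maneuver, and the same time-differentiation of the elliptic projection to invoke Lemma~\ref{lem1} with source $f_t-u_{tt}$. The handling of the initial term via the triangle inequality $\|\theta(0)\|\le\|u_0-u_{0,ms}\|+\|\rho(0)\|$ is also identical to the paper's.
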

\begin{proof}
  We rewrite
 \[
 u-u_{ms}=(u-\widehat{u})+(\widehat{u}-u_{ms}):=\theta+\rho, \quad\forall t>0,
 \]
where $\widehat{u}$ is the elliptic projection in the space $V_{ms}$ of the exact solution $u$ and satisfies (\ref{ms-a-err}) and (\ref{ms-l2-err}).
By (\ref{ms-l2-err}), we have
\begin{equation}\label{equ1}
\begin{split}
\| \theta\| \leq CH^{2}\Lambda^{-1} \kappa_0^{-\frac{1}{2}} \|\kappa^{-\frac{1}{2}}(f-u_{t})\|, \quad\forall t>0.
\end{split}
\end{equation}
By equation (\ref{eq:scheme}) and (\ref{ms-ell}),  for all $v\in V_{ms}$ and any $t>0$, we have
\begin{equation}\label{inset1}
\begin{split}
((\widehat{u}-u_{ms})_{t},v)+a(\widehat{u}-u_{ms},v) 
&=(\widehat{u}_{t}, v)+a(u,v)-(f,v)\\
&=((\widehat{u}-u)_{t},v).
\end{split}
\end{equation}
Taking $v=\rho=\widehat{u}-u_{ms}\in V_{ms}$, it follows that
\[
(\rho_{t},\rho)+a(\rho,\rho)=((\widehat{u}-u)_{t},\rho).
\]
Since $a (\rho,\rho) \geq0$,   Cauchy-Schwarz inequality implies
\[
  \frac{1}{2}\frac{d}{dt}\|\rho\|^{2}=\|\rho\|\frac{d}{dt}\|\rho\| \leq \|(\widehat{u}-u)_{t}\| \, \|\rho\|.
\]
So,
\[
\frac{d}{dt}\|\rho\| \leq \|(\widehat{u}-u)_{t}\|.
\]
Integrating in time from $0$ to $T$, we have
\[
\| \rho(\cdot, T)\|   \leq  \| \rho(\cdot, 0)\| +   \int_0^T \|(\widehat{u}-u)_{t}\|.
\]
By (\ref{ms-l2-err}), we have
\begin{equation}\label{timebound}
\begin{split}
\|(\widehat{u}-u)_{t}\| \leq CH^{2}\Lambda^{-1} \kappa_0^{-\frac{1}{2}} \|\kappa^{-\frac{1}{2}}(f_t-u_{tt})\|, \quad\forall t>0.
\end{split}
\end{equation}
The above inequality  implies that
\begin{equation}\label{equ2}
\begin{split}
  \|(\widehat{u}-u_{ms})(T)\|&\leq  \|(\widehat{u}-u_{ms})(\cdot, 0)\|+  \int_{0}^{T}\|(\widehat{u}-u)_{t}\|  \\
  &\leq  \|u_{0}-u_{0,ms}\|+  \|\widehat u_{0}-u_{0}\|+CH^{2}\Lambda^{-1} \kappa_0^{-\frac{1}{2}} \int_{0}^{T}\|\kappa^{-\frac{1}{2}}(f_{t}-u_{tt})\|.
\end{split}
\end{equation}
Finally,  we have
\begin{equation}
\begin{split}
  \|(u-u_{ms})(\cdot, T)\|&\leq \|(u-\widehat{u})(\cdot, T)\|+\|(\widehat{u}-u_{ms})(\cdot,T)\|\\
  &\leq CH^{2}\Lambda^{-1} \kappa_0^{-\frac{1}{2}} \bigg( \max_{0\leq t\leq T} \|\kappa^{-\frac{1}{2}}(f-u_{t})\|
  +\int_{0}^{T}\|\kappa^{-\frac{1}{2}}(f_{t}-u_{tt})\|ds\bigg) \\
  &\quad +\|u_{0}-u_{0,ms}\|.
\end{split}
\end{equation}
where we have used (\ref{equ1}) and (\ref{equ2}). This completes the proof.
\end{proof}

Now, we ready to  consider the error estimates in the $L^{2}$ norm and energy norm using the relaxed  CEM-GMsFEM. The following lemma is necessary to derive the error estimates.

\begin{lemma}\label{lem21} \cite{Eric2017CEM}  Let $u$ be the solution of the elliptic equation (\ref{elliptic}) and $\omega\in V_{rms}$ be the elliptic projection of  $u$. If $\{\chi_{j}\}$ are the bilinear partition of unity functions and $m=O(\log(\frac{\kappa_1}{H\kappa_0}))$, where $m$ is the number of oversampling layers in the construction of basis in (\ref{uncon1}), then we have
\begin{equation}\label{elli21}
  \|u-\omega\|_{a}\leq CH\Lambda^{-\frac{1}{2}}\|\kappa^{-\frac{1}{2}}f\|.
\end{equation}
\end{lemma}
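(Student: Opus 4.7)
The plan is to mirror the framework already established for the constrained CEM-GMsFEM in Lemma~\ref{lem12} and adapt each step to the relaxed, unconstrained setting (\ref{uncon1}). First I would introduce the \emph{global} relaxed basis functions $\psi^{(i)}_{j,rms,\text{glo}}\in V_0$ obtained by replacing $V_0(K_{i,m})$ in (\ref{uncon1}) with the full space $V_0$; their span defines $V_{rms}^{\text{glo}}\subset V_0$, and I would let $\omega^{\text{glo}}\in V_{rms}^{\text{glo}}$ be the elliptic projection of $u$ onto this global space. A triangle inequality
\[
\|u-\omega\|_a \;\le\; \|u-\omega^{\text{glo}}\|_a \;+\; \|\omega^{\text{glo}}-\omega\|_a
\]
then reduces the task to a best-approximation bound for the global (non-localized) method plus a localization error.

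For the global term I would use the Euler--Lagrange system for the unconstrained minimization, which in global form reads
\[
a(\omega^{\text{glo}},v)+s(\pi\omega^{\text{glo}}-\pi u,\pi v)=a(u,v),\qquad \forall v\in V_0.
\]
Decomposing $u-\omega^{\text{glo}}=\pi(u-\omega^{\text{glo}})+(I-\pi)(u-\omega^{\text{glo}})$ and invoking the spectral inequality $\|v\|_s^{2}\le (\lambda^{(i)}_{L_i+1})^{-1}\|v\|_a^{2}$ on $\widetilde V\cap V(K_i)$, together with the partition-of-unity identity $\widetilde\kappa=\kappa\sum_j|\nabla\chi_j|^2$, yields the target bound $\|u-\omega^{\text{glo}}\|_a\le CH\Lambda^{-1/2}\|\kappa^{-1/2}f\|$. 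The argument is essentially the same as in \cite{Eric2017CEM}, with the penalty term in the relaxed functional playing the role that the Lagrange multiplier plays in the constrained case.

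The main obstacle is the localization error $\|\omega^{\text{glo}}-\omega\|_a$: one must prove that each global basis function decays exponentially outside its target coarse block $K_i$. Here I would build a sequence of cutoff functions $\eta_m$ with $\eta_m\equiv 1$ on $K_{i,m-1}$, $\eta_m\equiv 0$ outside $K_{i,m}$, and $|\nabla\eta_m|\lesssim H^{-1}$, then test the Euler--Lagrange equation against $(1-\eta_m)\psi^{(i)}_{j,rms,\text{glo}}$. A Caccioppoli-type manipulation that absorbs the cutoff gradient terms via $\widetilde\kappa$ and then applies the spectral estimate on the annulus $K_{i,m}\setminus K_{i,m-1}$ gives a recursion of the form
\[
E_m \;\le\; \tfrac{1}{1+c\Lambda}\,E_{m-1},
\]
where $E_m$ is the energy of $\psi^{(i)}_{j,rms,\text{glo}}$ outside $K_{i,m}$, leading to geometric decay $E_m\le \theta^{2m}\|\psi^{(i)}_{j,rms,\text{glo}}\|_a^2$ with $\theta\in(0,1)$ depending on $\Lambda$ and the contrast. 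Standard summation over coarse blocks then produces $\|\omega^{\text{glo}}-\omega\|_a\le C\theta^m\|\kappa^{-1/2}f\|$, and the hypothesis $m=O(\log(\kappa_1/(H\kappa_0)))$ is exactly what forces $\theta^m=O(H)$, absorbing the localization error into the leading $H\Lambda^{-1/2}$ term. The delicate point throughout is tracking the constants in the cutoff estimate: the weight $\widetilde\kappa$ must be used to control $\kappa|\nabla\eta_m|^2$, since otherwise the contrast would contaminate the decay rate and the prescribed choice of $m$ would be insufficient.
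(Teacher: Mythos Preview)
The paper does not actually prove this lemma; it is stated as a citation from \cite{Eric2017CEM} and invoked directly, just as Lemma~\ref{lem12} is. Your proposal correctly reconstructs the argument from that reference: the triangle-inequality split into a global (non-localized) approximation term and a localization term, the spectral inequality $\|v\|_s^2\le \Lambda^{-1}\|v\|_a^2$ on $\widetilde V$ to handle the first term, and the iterated Caccioppoli-type cutoff argument to obtain exponential decay for the second. This is precisely the strategy in \cite{Eric2017CEM}, so your approach matches the source the paper defers to.

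One small remark on the displayed Euler--Lagrange identity you wrote for $\omega^{\text{glo}}$: the elliptic projection onto $V_{rms}^{\text{glo}}$ satisfies $a(u-\omega^{\text{glo}},v)=0$ for $v\in V_{rms}^{\text{glo}}$, not the penalized equation you stated; the penalty equation characterizes each global basis function $\psi^{(i)}_{j,rms,\text{glo}}$ individually. In practice the argument in \cite{Eric2017CEM} does not bound $\|u-\omega^{\text{glo}}\|_a$ through that identity but rather constructs an explicit element of $V_{rms}^{\text{glo}}$ (a linear combination of the global basis functions with coefficients $s_i(u,\phi_j^{(i)})$) and bounds the best-approximation error via the orthogonal decomposition $V_0=V_{rms}^{\text{glo}}\oplus\widetilde V$ together with the spectral bound. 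This is a cosmetic point; the rest of your outline, including the decay recursion and the role of the hypothesis $m=O(\log(\kappa_1/(H\kappa_0)))$, is on target.
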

By using  Lemma \ref{lem21} and the similar arguments in the proof of Theorem \ref{thm2} and Theorem \ref{thm1}, we can derive the estimates for the relaxed CEM-GMsFEM for the parabolic equation.
\begin{theorem}
\label{thm21}
Let $u_{rms}\in V_{rms}$ be the approximate solution of the  solution $u$ defined in (\ref{eq-weak}) in the space $V_{rms}$. Then
\[
\begin{split}
&\: \| (u - u_{rms})(\cdot,T) \|^{2} +\int_0^T \| u - u_{rms} \|_{a}^{2} \\
\leq &\: CH^{2}\Lambda^{-1} \kappa_0^{-\frac{1}{2}} \|\kappa^{-\frac{1}{2}}(f-u_{t})\|_{L^2(0,T;L^2(\Omega))} \bigg(\|u_{0}\|_a^{2}+\|u_{0,rms}\|_a^{2} + \|f\|^{2}_{L^2(0,T;L^2(\Omega))} \bigg)^{\frac{1}{2}} \\
\quad &\: + CH^2\Lambda^{-1}\|\kappa^{-\frac{1}{2}}(f-u_{t})\|_{L^2(0,T;L^2(\Omega))}^2.
\end{split}
\]
where $C$ is a constant  independent of $\kappa$ and the mesh size $H$.
\end{theorem}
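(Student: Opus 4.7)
The plan is to mirror the proof of Theorem \ref{thm2} line by line, replacing the multiscale space $V_{ms}$ by the relaxed space $V_{rms}$ and appealing to Lemma \ref{lem21} in place of Lemma \ref{lem12}. The starting point is to define, for each $t>0$, the Galerkin elliptic projection $\widehat{u}(t)\in V_{rms}$ of the continuous solution $u(t)$ by $a(u-\widehat{u},v)=0$ for all $v\in V_{rms}$. Since $u$ satisfies $a(u,v)=(f-u_t,v)$, the elliptic source of the projection problem at each fixed time is $f-u_t$, and Lemma \ref{lem21} immediately yields
\[
\|(u-\widehat{u})(t)\|_a \le CH\Lambda^{-1/2}\|\kappa^{-1/2}(f-u_t)(t)\|.
\]

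Next I would establish the $L^2$ analogue of (\ref{ms-l2-err}) for the relaxed space by running the Aubin--Nitsche duality argument of Lemma \ref{lem1} verbatim inside $V_{rms}$: introduce $w\in V_0$ with $a(w,v)=(u-\widehat{u},v)$ for all $v\in V_0$, let $\widehat{w}\in V_{rms}$ be its elliptic projection, use Galerkin orthogonality to replace $w$ by $w-\widehat w$ in the identity $\|u-\widehat u\|^2=a(w,u-\widehat u)$, and apply Lemma \ref{lem21} to both projection errors. This produces
\[
\|(u-\widehat{u})(t)\| \le CH^2\Lambda^{-1}\kappa_0^{-1/2}\|\kappa^{-1/2}(f-u_t)(t)\|,
\]
which is the relaxed-space counterpart of (\ref{ms-l2-err}).

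With these two projection bounds in hand I would then follow the algebra of Theorem \ref{thm2} unchanged: subtract (\ref{eq:scheme}) (posed on $V_{rms}$) from (\ref{eq-weak}) to get $((u-u_{rms})_t,v)+a(u-u_{rms},v)=0$ for all $v\in V_{rms}$, test with $v=\widehat u-u_{rms}\in V_{rms}$, rearrange to isolate $\tfrac{1}{2}\tfrac{d}{dt}\|u-u_{rms}\|^2+\|u-u_{rms}\|_a^2$ on the left and the cross terms $((u-u_{rms})_t,u-\widehat u)+a(u-u_{rms},u-\widehat u)$ on the right, and dispatch the right-hand side with Cauchy--Schwarz and Young's inequality. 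Integrating from $0$ to $T$, bounding $\int_0^T\|u_t\|^2$ and $\int_0^T\|(u_{rms})_t\|^2$ via the energy identity of Lemma \ref{lem14} (applied to $u$ with datum $u_0$ and to $u_{rms}$ with datum $u_{0,rms}$), and inserting the projection bounds above reproduces exactly the stated inequality.

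The only step that requires any genuine work beyond invoking Lemma \ref{lem21} is the $L^2$ projection estimate, and I expect this to be the main obstacle because it depends on Galerkin orthogonality holding in $V_{rms}$. Fortunately, the elliptic projection in Lemma \ref{lem21} is defined precisely by $a(u-\omega,v)=0$ for all $v\in V_{rms}$, so the duality computation carries through without modification. Once that point is verified, everything else is a routine transcription of the arguments already used for Theorem \ref{thm2}.
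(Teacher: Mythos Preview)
Your proposal is correct and matches the paper's approach exactly: the paper does not give a separate proof of Theorem \ref{thm21} but simply states that it follows from Lemma \ref{lem21} together with the same arguments as in Theorem \ref{thm2} (and Lemma \ref{lem1}), which is precisely the line-by-line transcription you outline, including the Aubin--Nitsche duality step carried out in $V_{rms}$.
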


\begin{theorem}\label{thm22}
Let $u_{rms}\in V_{rms}$ be the approximate solution of the  solution $u$  defined in (\ref{eq-weak}).  Then
\[
\begin{split}
  \|u-u_{rms}\|&\leq CH^{2}\Lambda^{-1} \kappa_0^{-\frac{1}{2}} \bigg( \max_{0\leq t\leq T} \|\kappa^{-\frac{1}{2}}(f-u_{t})\|
  +\|\kappa^{-\frac{1}{2}}(f_{t}-u_{tt})\|_{ L^1(0,T;L^2(\Omega))    }\bigg) \\
  &\quad +\|u_{0}-u_{0,rms}\|.
   \end{split}
\]
where $C$ is a constant  independent of $\kappa$ and $H$.
\end{theorem}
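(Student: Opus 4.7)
The plan is to mimic the proof of Theorem \ref{thm1} essentially verbatim, with $V_{ms}$ replaced by $V_{rms}$ and Lemma \ref{lem1} replaced by an analogous pair of estimates derived from Lemma \ref{lem21}. The statement of Lemma \ref{lem21} already supplies the energy-norm bound for the elliptic projection in the relaxed space; the first preliminary step I would carry out is the Aubin--Nitsche duality argument from the proof of Lemma \ref{lem1}, reproduced verbatim in the relaxed setting, to upgrade this to an $L^2$ bound of the form
\[
\|u-\omega\| \leq CH^{2}\Lambda^{-1}\kappa_{0}^{-\frac{1}{2}}\|\kappa^{-\frac{1}{2}}(f-u_{t})\|,
\]
where $\omega\in V_{rms}$ is the elliptic projection of the solution $u$ to (\ref{eq-weak}) at time $t$. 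This step is formal since the dual problem and the projection property $a(u-\omega,v)=0$ for $v\in V_{rms}$ behave the same way as in the constrained case.

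With the $L^2$ and energy estimates for $u-\omega$ in hand, I would split
\[
u-u_{rms} = (u-\omega) + (\omega-u_{rms}) =: \theta+\rho,
\]
and bound $\theta$ directly by the upgraded Aubin--Nitsche estimate. For $\rho\in V_{rms}$, I would subtract the equations satisfied by $u$ and $u_{rms}$, test against $v\in V_{rms}$, and use $a(u-\omega,v)=0$ to get
\[
(\rho_{t},v) + a(\rho,v) = ((\omega-u)_{t},v) \quad \forall v\in V_{rms}.
\]
Choosing $v=\rho$ and using $a(\rho,\rho)\geq 0$ together with Cauchy--Schwarz yields $\tfrac{d}{dt}\|\rho\|\leq \|(\omega-u)_{t}\|$, which I would integrate from $0$ to $T$, obtaining
\[
\|\rho(\cdot,T)\| \leq \|\rho(\cdot,0)\| + \int_{0}^{T}\|(\omega-u)_{t}\|.
\]
The time-differentiated elliptic projection identity (as in (\ref{timebound})) then gives $\|(\omega-u)_t\|\leq CH^{2}\Lambda^{-1}\kappa_{0}^{-\frac{1}{2}}\|\kappa^{-\frac{1}{2}}(f_t-u_{tt})\|$, and $\|\rho(\cdot,0)\|\leq \|u_{0}-u_{0,rms}\|+\|\omega(\cdot,0)-u_{0}\|$, which combine with the bound on $\theta(T)$ via the triangle inequality to give the claimed estimate.

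The only place requiring genuine work, rather than transcription, is the duality argument in the preliminary step: one must verify that for the relaxed elliptic projection operator associated with $V_{rms}$, choosing the dual right-hand side $u-\omega$ and applying Lemma \ref{lem21} to both the primal and dual problems yields the $H^{-2}$-type improvement. Because the relaxed projection still satisfies $a(u-\omega,v)=0$ for all $v\in V_{rms}$, the same two-line computation as in Lemma \ref{lem1} goes through, so I do not expect any real obstruction; I would simply state this as an auxiliary lemma preceding the theorem and then invoke it. After that, the time-stepping argument is identical to that in Theorem \ref{thm1} and requires no new ingredients.
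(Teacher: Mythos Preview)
Your proposal is correct and matches the paper's approach exactly: the paper does not give a separate proof of this theorem but simply states that it follows from Lemma \ref{lem21} and ``the similar arguments in the proof of Theorem \ref{thm2} and Theorem \ref{thm1}.'' Your outline faithfully reproduces those arguments in the relaxed setting, including the Aubin--Nitsche step and the $\theta+\rho$ splitting, so there is nothing to add.
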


\subsection{A posteriori error bound}
In addition to a priori error estimates,  a posteriori error estimation is also very important. It can give a computable error bound to access the quality of the numerical solution.
It also serves as a foundation for an adaptive basis enrichment strategy.
Now we derive a posteriori error bound of CEM-GMsFEM for the parabolic equation.
\begin{theorem}\label{thmpost}
Let $u$ be the solution of the parabolic equation (\ref{eq-weak}) and $u_{ms}$  the approximate solution of  $u$ in the space $V_{ms}$. Then
\begin{equation}\label{posterior1}
\|(u-u_{ms})(T)\|^{2}+\int_{0}^{T}\|u-u_{ms}\|_{a}^{2}\leq 2M(1+\Lambda^{-1})\sum_{n}\sum_{i=1}^{N_c}\|R_{i}^{n}\|_{Q^{*}}^{2}+\|(u-u_{ms})(0)\|^{2},
\end{equation}
where the local residual $R_{i}^{n}$ is defined as
\[
R_{i}^{n}(v)=\int_{t_{n}}^{t_{n+1}}\Big\{(f,v)-((u_{ms})_{t},v)-a(u_{ms},v)\Big\}, \quad v\in L^2(t_n,t_{n+1};V_{0}(\omega_{i})),
\]
and the residual norm is defined by
\[
\|R_{i}^{n}\|_{Q^*}:=\sup_{v\in L^{2}(t_n,t_{n+1};V_{0}(\omega_{i}))}\frac{R_{i}^{n}(v)}{\|v\|_{L^{2}(t_n,t_{n+1};V_{0}(\omega_{i}))}}.
\]
The constant $M$ is the maximum number of overlapping coarse neighborhoods.
\end{theorem}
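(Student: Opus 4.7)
The strategy is a standard residual-type a posteriori energy argument, localized through the partition of unity $\{\chi_i\}$ and closed by the spectral bound on the auxiliary space of CEM-GMsFEM. First I would subtract the discrete scheme (\ref{eq:scheme}) from the weak form (\ref{eq-weak}) to obtain, for every $v\in V_{0}$,
\[
\bigl((u-u_{ms})_{t},v\bigr)+a(u-u_{ms},v)=(f,v)-\bigl((u_{ms})_{t},v\bigr)-a(u_{ms},v)=:\mathcal{R}(t;v).
\]
Setting $e:=u-u_{ms}\in V_{0}$ and testing with $v=e$ produces the energy identity $\tfrac{1}{2}\tfrac{d}{dt}\|e\|^{2}+\|e\|_{a}^{2}=\mathcal{R}(t;e)$; integrating over $[0,T]=\bigcup_{n}[t_{n},t_{n+1}]$ yields
\[
\tfrac{1}{2}\|e(T)\|^{2}+\int_{0}^{T}\|e\|_{a}^{2}=\tfrac{1}{2}\|e(0)\|^{2}+\sum_{n}\int_{t_{n}}^{t_{n+1}}\mathcal{R}(t;e)\,dt.
\]

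Next I would localize the residual by writing $e=\sum_{i=1}^{N_{c}}\chi_{i}e$, noting that $\chi_{i}e\in V_{0}(\omega_{i})$ because $\chi_{i}$ vanishes on $\partial\omega_{i}$. Linearity of $\mathcal{R}$ gives $\int_{t_{n}}^{t_{n+1}}\mathcal{R}(t;e)\,dt=\sum_{i} R_{i}^{n}(\chi_{i}e)$, which is bounded termwise by $\|R_{i}^{n}\|_{Q^{*}}\,\|\chi_{i}e\|_{L^{2}(t_{n},t_{n+1};V_{0}(\omega_{i}))}$. A Cauchy--Schwarz over the double index $(n,i)$ then reduces the task to estimating $\sum_{n}\sum_{i}\|\chi_{i}e\|_{L^{2}(t_{n},t_{n+1};V_{0}(\omega_{i}))}^{2}$. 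The pointwise-in-time PoU inequality $|\nabla(\chi_{i}e)|^{2}\leq 2\bigl(\chi_{i}^{2}|\nabla e|^{2}+|\nabla\chi_{i}|^{2}e^{2}\bigr)$, combined with the crude bound $\chi_{i}^{2}\leq 1$ and the coarse overlap count $M$ for the first term, together with the defining identity $\widetilde{\kappa}=\kappa\sum_{j}|\nabla\chi_{j}|^{2}$ for the second, yields
\[
\sum_{i}\|\chi_{i}e\|_{a}^{2}\leq 2M\|e\|_{a}^{2}+2\|e\|_{s}^{2}.
\]

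Finally I would invoke the spectral bound $\|w\|_{s}^{2}\leq\Lambda^{-1}\|w\|_{a}^{2}$ valid for $w\in\widetilde{V}=\{v\in V:\pi(v)=0\}$: since the CEM scheme forces $\mathcal{R}(t;w)=0$ for every $w(t)\in V_{ms}$, replacing $e$ by $e-w$ inside the residual for a suitable CEM quasi-interpolant $w\in V_{ms}$ leaves the remainder essentially $\pi$-orthogonal, and hence $\sum_{i}\|\chi_{i}e\|_{a}^{2}\leq 2M(1+\Lambda^{-1})\|e\|_{a}^{2}$. Substituting this into the Cauchy--Schwarz bound and applying Young's inequality absorbs $\int_{0}^{T}\|e\|_{a}^{2}$ into the left-hand side and produces the claimed estimate. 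The main obstacle is rigorously transferring the spectral estimate $\|\cdot\|_{s}^{2}\leq\Lambda^{-1}\|\cdot\|_{a}^{2}$, which is natural for elliptic CEM-GMsFEM errors in $\widetilde{V}$, to the parabolic error $e$, which carries a time-derivative contribution and is not a priori $\pi$-orthogonal; the cleanest remedy is to combine the pointwise-in-time Galerkin orthogonality of $u_{ms}$ with a $\pi$-stable quasi-interpolation into $V_{ms}$, after which only standard Cauchy--Schwarz and Young-type manipulations remain.
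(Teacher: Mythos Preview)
Your strategy is essentially the one the paper uses: energy identity for $e=u-u_{ms}$, localization through $e=\sum_i\chi_i e$, the product-rule estimate $\|\chi_i w\|_{a}^2\le 2(\|w\|_{a_i}^2+\|w\|_{s_i}^2)$, the spectral bound $\|w\|_{s_i}^2\le\Lambda^{-1}\|w\|_{a_i}^2$ on $\widetilde V$, then Cauchy--Schwarz over $i$, the overlap factor $M$, and Young's inequality to absorb $\int_0^T\|e\|_a^2$.

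The one place your write-up drifts from the paper is the \emph{order} in which the projection is introduced. The paper subtracts a projection $\widehat z\in V_{ms}$ of $z=e$ \emph{before} multiplying by $\chi_i$, using Galerkin orthogonality to write $\mathcal R(t;e)=\mathcal R(t;e-\widehat z)$ and then localizing $e-\widehat z$; because $\pi(e-\widehat z)=0$, the spectral bound applies directly to give $\|\chi_i(e-\widehat z)\|_{a_i}^2\le 2(1+\Lambda^{-1})\|e\|_{a_i}^2$. You instead localize $e$ first, obtain $\sum_i\|\chi_i e\|_a^2\le 2M\|e\|_a^2+2\|e\|_s^2$, and only afterward propose to subtract a quasi-interpolant $w$. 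That forces you to go back and redo the localization on $e-w$, so the inequality you write at the end, $\sum_i\|\chi_i e\|_a^2\le 2M(1+\Lambda^{-1})\|e\|_a^2$, is not what actually follows: the left side should carry $e-w$, and you then need the $a$-stability $\|e-w\|_{a_i}\le\|e\|_{a_i}$ to close. In the paper this last step is immediate because the ``spectral projection'' is $a$-orthogonal on each element (the eigenfunctions are simultaneously $a_i$- and $s_i$-orthogonal). If you move the subtraction of $w$ ahead of the partition-of-unity step and take $w$ to be this spectral projection, your argument and the paper's coincide line for line.
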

\begin{proof}
First of all, we have
\begin{equation}\label{ac}
  \begin{split}
  \frac{1}{2}\frac{d}{dt}\|u-u_{ms}\|^{2}+\|u-u_{ms}\|_{a}^{2}&=((u-u_{ms})_{t},u-u_{ms})+a(u-u_{ms},u-u_{ms})\\
  &=(f,u-u_{ms})-((u_{ms})_{t},u-u_{ms})-a(u_{ms},u-u_{ms}) \\
  &=(f,z)-((u_{ms})_{t},z)-a(u_{ms},z),
  \end{split}
\end{equation}
where $z=u-u_{ms}$. Let $\widehat{z}$ be the spectral projection of $z$. Using (\ref{eq:scheme}), we have
\begin{equation}
 \frac{1}{2}\frac{d}{dt}\|u-u_{ms}\|^{2}+\|u-u_{ms}\|_{a}^{2} = (f,z-\widehat{z})-((u_{ms})_{t},z-\widehat{z})-a(u_{ms},z-\widehat{z}).
\end{equation}
Integrating with respect to time from $t_n$ to $t_{n+1}$, we have
\begin{eqnarray*}
  &&\frac{1}{2}\|(u-u_{ms})(\cdot,t_{n+1})\|^{2}+\int_{t_n}^{t_{n+1}}\|u-u_{ms}\|_{a}^{2}\\
  &&= R^n(z-\widehat{z}) +\frac{1}{2}\|(u-u_{ms})(\cdot,t_n)\|^{2},
\end{eqnarray*}
where we define the global residual $R^n$  by
\[
R^n(v)=\int_{t_n}^{t_{n+1}}\{(f,v)-((u_{ms})_{t},v)-a(u_{ms},v)\},
\]
which is a linear functional on $Q^n:=L^{2}((t_n,t_{n+1});V_{0})$.
We can rewrite
\[
R^n(v)=R^n(\sum_{i=1}^{N_c} \chi_{i}v)=\sum_{i=1}^{N_c}\int_{t_n}^{t_{n+1}}\{(f,\chi_{i}v)-((u_{ms})_{t},\chi_{i}v)-a(u_{ms},\chi_{i}v)\}.
\]
Using the definition of $R^n_i$, we have $R^n(v)=\sum_{i=1}^{N_c} R^n_{i}(\chi_{i}v)$.

Notice that
\[
R^n_{i}(\chi_{i}(z-\widehat{z}))\leq\|R^n_{i}\|_{Q^{*}}\|\chi_{i}(z-\widehat{z})\|_{Q},
\]
where $\|\cdot\|_Q$ denotes the norm in $Q^n$.
In addition, we have
\begin{equation*}
\|\chi_{i}(z-\widehat{z})\|_{Q}^2
= \int_{t_n}^{t_{n+1}} \| \chi_i(z-\widehat{z})\|_{a_i}^2
\leq 2\int_{t_n}^{t_{n+1}} ( \| z-\widehat{z}\|_{a_i}^2 + \|z-\widehat{z}\|_{s_i}^2)
\leq 2(1+\Lambda^{-1}) \int_{t_n}^{t_{n+1}}  \| z\|_{a_i}^2,
\end{equation*}
where $\|\cdot\|_{a_i}$ and $\|\cdot\|_{s_i}$ denote the $a$-norm and $s$-norm restricted in $\omega_i$.
Therefore,
\begin{eqnarray*}
  &&\frac{1}{2}\|(u-u_{ms})(\cdot,t_{n+1})\|^{2}+\int_{t_n}^{t_{n+1}}\|u-u_{ms}\|_{a}^{2}\\
  &&\leq (2(1+\Lambda^{-1}))^{\frac{1}{2}}\sum_{i=1}^{N_c}  \|R^n_{i}\|_{Q^{*}} \|u-u_{ms}\|_{Q_i} +\frac{1}{2}\|(u-u_{ms})(\cdot,t_n)\|^{2},
\end{eqnarray*}
where $\|u-u_{ms}\|_{Q_i}^2 = \int_{t_n}^{t_{n+1}} \|u-u_{ms}\|_{a_i}^{2}$.
Using Cauchy Schwarz inequality, we have
\begin{eqnarray*}
  &&\frac{1}{2}\|(u-u_{ms})(\cdot,t_{n+1})\|^{2}+ \frac{1}{2} \int_{t_n}^{t_{n+1}}\|u-u_{ms}\|_{a}^{2}\\
  &&\leq M(1+\Lambda^{-1}) \sum_{i=1}^{N_c}  \|R^n_{i}\|_{Q^{*}}^2 +\frac{1}{2}\|(u-u_{ms})(\cdot,t_n)\|^{2}.
\end{eqnarray*}
The proof is completed by summing the above over all $n$.
\end{proof}
\section{Full discrete schemes}

In addition to spatial variable discretization, we also need to discretize the temporal variable to solve the parabolic equation.
In the section, we present two widely  used schemes: backward Euler and forward Euler.
For the presentation, we use the following notations, for any $n=0,1,2,\cdots$,
\[
 t^{n}:=n\triangle t, \quad  u^{n}:=u(t^{n},x).
\]

\subsection{Backward Euler}
We first consider the backward Euler method. Let $\triangle t$ is the time step and $V_{ms}$ be the multiscale finite element space. And
let $U_{ms}^{n}:=u_{ms}(t_{n})$. Then the fully discrete formulation using backward Euler method is
\begin{equation}\label{full}
\left\{
 \begin{aligned}
  (\frac{U_{ms}^{n}-U_{ms}^{n-1}}{\Delta t},v)+a(U_{ms}^{n},v)&=(f(t_{n}),v)\quad\quad\quad \forall v\in V_{ms}\\
   U_{ms}^{0}&=u_{0,ms}.
\end{aligned}
   \right.
\end{equation}

We can  show that the backward Euler method is unconditionally stable.
 In the equations (\ref{full}), we  choose  $v=U_{ms}^{n}$. Because  $a(U_{ms}^{n},U_{ms}^{n})\geq 0$,
\begin{equation}\label{sta}
  \|U_{ms}^{n}\|^{2}-(U_{ms}^{n-1},U_{ms}^{n})\leq\Delta t\|f^{n}\|\cdot\|U_{ms}^{n}\|,
\end{equation}
where $f^n = f(t_n)$.
By  Cauchy-Schwarz inequality,
\[
 (U_{ms}^{n-1},U_{ms}^{n})\leq\|U_{ms}^{n-1}\|\|U_{ms}^{n}\|,
\]
 the inequality (\ref{sta}) implies
\[
  \|U_{ms}^{n}\|\leq\|U_{ms}^{n-1}\|+\Delta t\|f^{n}\|.
\]
By repeating  the above inequality, we have
\begin{equation}\label{stable}
  \|U_{ms}^{n}\|\leq\|U_{ms}^{0}\|+\Delta t\sum_{j=1}^{n}\|f^{j}\|.
\end{equation}
This shows that the backward Euler method is unconditionally stable.

The following theorem gives the convergence result for the fully discrete scheme using backward Euler.
\begin{theorem}\label{thm5}
Let $U_{ms}^{n}$ and $u$ be the solution of (\ref{full}) and (\ref{ex-eq}), respectively.  Then
\[
 \|U_{ms}^{n}-u(t_{n})\|\leq  CH^{2}\Lambda^{-1}\kappa_{0}^{-\frac{1}{2}} \|\kappa^{-\frac{1}{2}}(f^{n}-u_{t}(t_{n})\|+\Delta t\int_{0}^{t_{n}}\|u_{tt}\|ds+\|u_{0,ms}-u_{0}\|.
\]
\end{theorem}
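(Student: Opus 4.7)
The plan is to adapt the standard parabolic error-splitting argument using the elliptic (CEM-GMsFEM) projection as the intermediate quantity, just as in the proof of Theorem~\ref{thm1}, but now accounting for the backward Euler time stepping. Specifically, for each $n$ I would write
\[
U_{ms}^n - u(t_n) = \rho^n + \theta^n, \qquad \rho^n := U_{ms}^n - \widehat{u}(t_n), \quad \theta^n := \widehat{u}(t_n) - u(t_n),
\]
where $\widehat{u}(t)\in V_{ms}$ is the elliptic projection from Lemma~\ref{lem1}. The bound on $\theta^n$ is immediate from \eqref{ms-l2-err}: at $t=t_n$, $\|\theta^n\|\leq CH^{2}\Lambda^{-1}\kappa_0^{-1/2}\|\kappa^{-1/2}(f^n-u_t(t_n))\|$, which yields the first term of the desired estimate.

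The real work is the bound on $\rho^n$. I would subtract the backward Euler scheme \eqref{full} from the semidiscrete identity $(u_t(t_n),v)+a(u(t_n),v)=(f^n,v)$ tested against $v\in V_{ms}$, using $a(\widehat{u}(t_n),v)=a(u(t_n),v)$ on $V_{ms}$, to obtain
\[
\Big(\frac{\rho^n-\rho^{n-1}}{\Delta t},v\Big)+a(\rho^n,v)=-(\omega^n,v), \qquad \omega^n := \frac{\widehat{u}(t_n)-\widehat{u}(t_{n-1})}{\Delta t}-u_t(t_n).
\]
Choosing $v=\rho^n$, discarding the nonnegative $a(\rho^n,\rho^n)$ term, and using the identity $(\rho^n-\rho^{n-1},\rho^n)\geq \tfrac{1}{2}(\|\rho^n\|^2-\|\rho^{n-1}\|^2)$ together with Cauchy--Schwarz gives $\|\rho^n\|\leq \|\rho^{n-1}\|+\Delta t\,\|\omega^n\|$, and summing in $n$ yields $\|\rho^n\|\leq \|\rho^0\|+\Delta t\sum_{j=1}^{n}\|\omega^j\|$. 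The initial datum contribution $\|\rho^0\|=\|u_{0,ms}-\widehat{u}(0)\|$ is absorbed into $\|u_{0,ms}-u_0\|$ plus a $\theta$-type term bounded as above.

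The last and slightly delicate step is controlling $\Delta t\sum_j\|\omega^j\|$. I would split
\[
\Delta t\,\omega^j = \int_{t_{j-1}}^{t_j}(\widehat{u}_t-u_t)\,ds + \int_{t_{j-1}}^{t_j}\big(u_t(s)-u_t(t_j)\big)\,ds,
\]
so that, by \eqref{timebound}, the first piece contributes a higher-order term of the form $CH^2\Lambda^{-1}\kappa_0^{-1/2}\int_0^{t_n}\|\kappa^{-1/2}(f_t-u_{tt})\|\,ds$ (absorbed into the spatial error constant), while the second piece is bounded by a double integral of $u_{tt}$ whose total mass is $\Delta t\int_0^{t_n}\|u_{tt}\|\,ds$, exactly the time-discretization term appearing in the theorem. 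Combining the $\theta^n$ and $\rho^n$ bounds via the triangle inequality then gives the stated estimate.

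The main obstacle I anticipate is the bookkeeping for $\omega^n$: one must simultaneously handle the spatial error in the time-derivative of the elliptic projection (using that the bound \eqref{timebound} applies because $\widehat{u}_t$ is the elliptic projection of $u_t$, which follows by differentiating \eqref{ms-ell} in $t$) and the standard backward-Euler consistency error, and then verify that the telescoped sum of these two contributions reassembles into the two integral quantities on the right-hand side of the theorem.
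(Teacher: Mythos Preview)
Your overall approach mirrors the paper's: split through the elliptic projection $\widehat u$, bound the projection error by \eqref{ms-l2-err}, derive the discrete error equation for the remaining part, apply the backward-Euler stability recursion $\|\rho^n\|\le\|\rho^{n-1}\|+\Delta t\,\|\omega^n\|$, and split the consistency term $\omega^n$ into a time-truncation piece and a projection-time-derivative piece. (Your $\rho,\theta$ are the paper's $\theta,\rho$, a purely cosmetic swap.)

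The gap is your claim that the first piece, bounded via \eqref{timebound} by
\[
CH^2\Lambda^{-1}\kappa_0^{-1/2}\int_0^{t_n}\|\kappa^{-1/2}(f_t-u_{tt})\|\,ds,
\]
can be ``absorbed into the spatial error constant''. It cannot: the spatial term appearing in Theorem~\ref{thm5} is the \emph{pointwise-in-time} quantity $\|\kappa^{-1/2}(f^n-u_t(t_n))\|$, whereas your contribution is an $L^1$-in-time integral of the different data $f_t-u_{tt}$, and there is no inequality converting one into the other. Your argument therefore proves a correct estimate, but with an additional term on the right-hand side (essentially the fully discrete analogue of Theorem~\ref{thm1}), not Theorem~\ref{thm5} as stated. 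The paper handles this piece differently: it uses that $\Delta t\,w_2^j=(u-\widehat u)(t_j)-(u-\widehat u)(t_{j-1})$ exactly and telescopes the sum so that only $\|(u-\widehat u)(t_n)\|$ and $\|(u-\widehat u)(0)\|$ survive; these are already of the form $CH^2\Lambda^{-1}\kappa_0^{-1/2}\|\kappa^{-1/2}(f-u_t)\|$ at a single time and combine with the $\rho^n$ and $\theta^0$ contributions to give precisely the stated bound. That telescoping is the device you are missing. (Be aware, however, that the inequality the paper writes at that step, bounding each $\|(u-\widehat u)(t_j)-(u-\widehat u)(t_{j-1})\|$ by the \emph{difference} of the norms, is not valid in general, so while telescoping is the intended mechanism, the paper's own justification of it is not fully rigorous either.)
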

\begin{proof}
Let $\widehat{u}$ be the elliptic projection of $u$. Then
\[
  U_{ms}^{n}-u(t_{n})=(U_{ms}^{n}-\widehat{u}(t_{n}))+(\widehat{u}(t_{n})-u(t_{n}))=\theta^{n}+\rho^{n}
\]
From the inequality (\ref{ms-l2-err}), we have
\[
  \|\rho^{n}\|=\|\widehat{u}(t_{n})-u(t_{n})\|  \leq C H^{2}\Lambda^{-1}\kappa_{0}^{-\frac{1}{2}} \|\kappa^{-\frac{1}{2}}(f^{n}-u_{t}(t_{n})\|.
\]
 By equation (\ref{ex-eq}) and (\ref{ms-ell}),  for any $v\in V_{ms}$
\begin{equation}\label{inset2}
\begin{split}
((\widehat{u}-u_{ms})_{t},v)+a(\widehat{u}-u_{ms},v)
&=(\widehat{u}_{t},v)+a(u,v)-(f,v)\\
&=((\widehat{u}-u)_{t},v).
\end{split}
\end{equation}
Because  $\partial_{t} U_{ms}^{n}=\frac{U_{ms}^{n}-U_{ms}^{n-1}}{\Delta t}$,  we can rewrite  (\ref{inset2}) as
\begin{equation}\label{infull}
  (\partial_{t} \theta^{n},v)+a(\theta^{n},v)=(u_{t}(t_{n})-\partial_{t}\widehat{u}^{n},v).
\end{equation}
Due to
\begin{equation}\label{fload}
\begin{split}
u_{t}(t_{n})-\partial_{t} \widehat{u}^{n} &=(u_{t}(t_{n})-\partial_{t}u^{n})+(\partial_{t}u^{n}-\partial_{t} \widehat{u}^{n})\\
  &:=w_{1}^{n}+w_{2}^{n},
\end{split}
\end{equation}
combining  (\ref{stable}), (\ref{infull}) and (\ref{fload}) gives
\begin{equation}
  \|\theta^{n}\|\leq\|\theta^{0}\|+\Delta t\sum_{j=1}^{n}\|w_{1}^{j}\|+\Delta t\sum_{j=1}^{n}\|w_{2}^{j}\|.
\end{equation}
If we choose $U_{ms}^{0}=u_{0,ms}$ and apply  (\ref{ms-l2-err}), we have
\[
  \|U_{ms}^{0}-\widehat{u}_{0}\|\leq\|u_{0,ms}-u_{0}\|+\|u_{0}-\widehat{u}_{0}\|.
\]
By Taylor's formula, it follows that
\[
  w_{1}^{j}=u_{t}(t_{j})-(\Delta t)^{-1}\big(u(t_{j})-u(t_{j-1})\big)=(\Delta t)^{-1}\int_{t_{j-1}}^{t_{j}}(s-t_{j-1})u_{tt}(s)ds.
\]
Then
\[
  \Delta t\sum_{j=1}^{n}\|w_{1}^{j}\|\leq\sum_{j=1}^{n}\|\int_{t_{j-1}}^{t_{j}}(s-t_{j-1})u_{tt}(s)ds\|\leq\Delta t\int_{0}^{t_{n}}\|u_{tt}\|ds.
\]
Because
\[
  w_{2}^{j}=\partial_{t}u^{j} -\partial_{t}\widehat{u}^{j}=(\Delta t)^{-1}\int_{t_{j-1}}^{t_{j}}(u_{t}-\widehat{u}_{t})ds,
\]
we have
\begin{equation}
\begin{split}
  \Delta t \sum_{j=1}^{n}\|w_{2}^{j}\|&\leq\sum_{j=1}^{n}(\|u(t_{j})-\widehat{u}(t_{j})\|-\|u(t_{j-1})-\widehat{u}(t_{j-1})\|)\\
  &\leq\|u(t_{n})-\widehat{u}(t_{n})\|-\|u(0)-\widehat{u}(0)\|.
\end{split}
\end{equation}
Hence,
\[
  \|U_{ms}^{n}-u(t_{n})\|\leq  CH^{2}\Lambda^{-1}\kappa_{0}^{-\frac{1}{2}} \|\kappa^{-\frac{1}{2}}(f^{n}-u_{t}(t_{n})\|+\Delta t\int_{0}^{t_{n}}\|u_{tt}\|ds+\|u_{0,ms}-u_{0}\|.
\]
\end{proof}

\subsection{Forward Euler discretization}
Using the forward Euler method, we have the following weak formulation for (\ref{ex-eq})
\begin{equation}\label{forward}
\left\{
 \begin{aligned}
  (\frac{U_{ms}^{n+1}-U_{ms}^{n}}{\Delta t},v)+a(U_{ms}^{n},v)&=(f(t_{n}),v)\quad\quad \forall v\in V_{ms},n\geq 1\\
   U_{ms}^{0}&=u_{0,ms}.
\end{aligned}
   \right.
\end{equation}
If we define
\[
V_{ms}=\text{span}\big\{\varphi_{k,ms}|1\leq k \leq \sum_{i=1}^{N} L_{i}\big\}=\text{span}\{\psi_{j,ms}^{(i)}|1\leq j\leq L_{i},1\leq i\leq N\},
\]
then  $U_{ms}^{n}$ and $f(t_{n})$ can be expressed as follows,
 \[
 U_{ms}^{n}=\sum_{k}\alpha_{k}^{n}\varphi_{k,ms},\quad f(t_{n})=\sum_{k} b_{k}^{n}\varphi_{k,ms}.
 \]
We can express the equation (\ref{forward}) in the following  matrix form,
\begin{equation}\label{matrix}
  M\alpha^{n+1}=(M-\Delta t A)\alpha^{n}+\Delta t b^{n},
\end{equation}
where
\begin{eqnarray*}
&&(\alpha^{n})_{i}=\alpha_{i}^{n},\quad (b^{n})_{i}=b_{i}^{n},\\
&&(M)_{i,j}=\int_{\Omega}\varphi_{i,ms}\varphi_{j,ms},\quad
(A)_{i,j}=\int_{\Omega}\kappa\nabla\varphi_{i,ms}\cdot\nabla\varphi_{j,ms}.
\end{eqnarray*}
Using the equation (\ref{matrix}) and recursion, we obtain
\begin{equation}\label{recu}
  \alpha^{n}=(I-\Delta t M^{-1}A)^{n}\alpha^{0}+\Sigma_{i=0}^{n}(I-\Delta t M^{-1}A)^{i}\Delta t M^{-1}b^{n-i}¡£
\end{equation}
It is easy to prove that  the equation (\ref{recu}) is stable if and only if $|1-\Delta t \lambda|\leq1$, where $\lambda$ is the eigenvalue of
\[
a(\varphi_{i,ms},v)=\lambda(\varphi_{i,ms},v), \quad \forall v\in V_{ms}.
\]
We remark that the analysis for the convergence is similar to that for the backward Euler scheme, so we omit it.

\section{Numerical results}
In this section, we present some numerical results by using  CEM-GMsFEM to solve the parabolic equation with the multiscale permeability field  $\kappa(x)$.
For time discretization, we will use the backward Euler scheme. In Section \ref{s1}, we use  CEM-GMsFEM to solve the parabolic equation with different multiscale permeability fields. In Section \ref{s2}, the numerical results using the CEM-GMsFEM will be given for the parabolic equation in fracture porous media. In Section \ref{s3}, we verify the posteriori error bound by numerical examples.

For the numerical examples, we consider the parabolic equation (\ref{ex-eq}) with the homogeneous Dirichlet boundary condition, source term $f(x,t)$ and initial condition $u_{0}(x)$ are defined by
\[
\begin{aligned}
f(x,t)&=3\pi^{2}\exp(\pi^{2}t)\sin(\pi x_{1})\sin(\pi x_{2}), \quad x\in [0,1]^{2},\\
u_{0}(x)&=\sin(\pi x_{1})\sin(\pi x_{2}), \quad x\in [0,1]^{2}.
\end{aligned}
\]
Here $x:=(x_{1},x_{2})$. In the spatial direction, the computational domain is $\Omega=[0,1]^2$ and we use $200\times 200$ fine grid to compute a reference solution. We will choose different coarse grid size to compute the relative error between the fine-scale solution and CEM-GMsFEM solution. Two high contrast permeability fields $\kappa(x)$ used in the numerical examples are shown in Figure \ref{cof}.
\begin{figure}[htbp]
\centering
\subfigure[$ \kappa_{1}$]{\includegraphics[width=2.5in, height=2in]{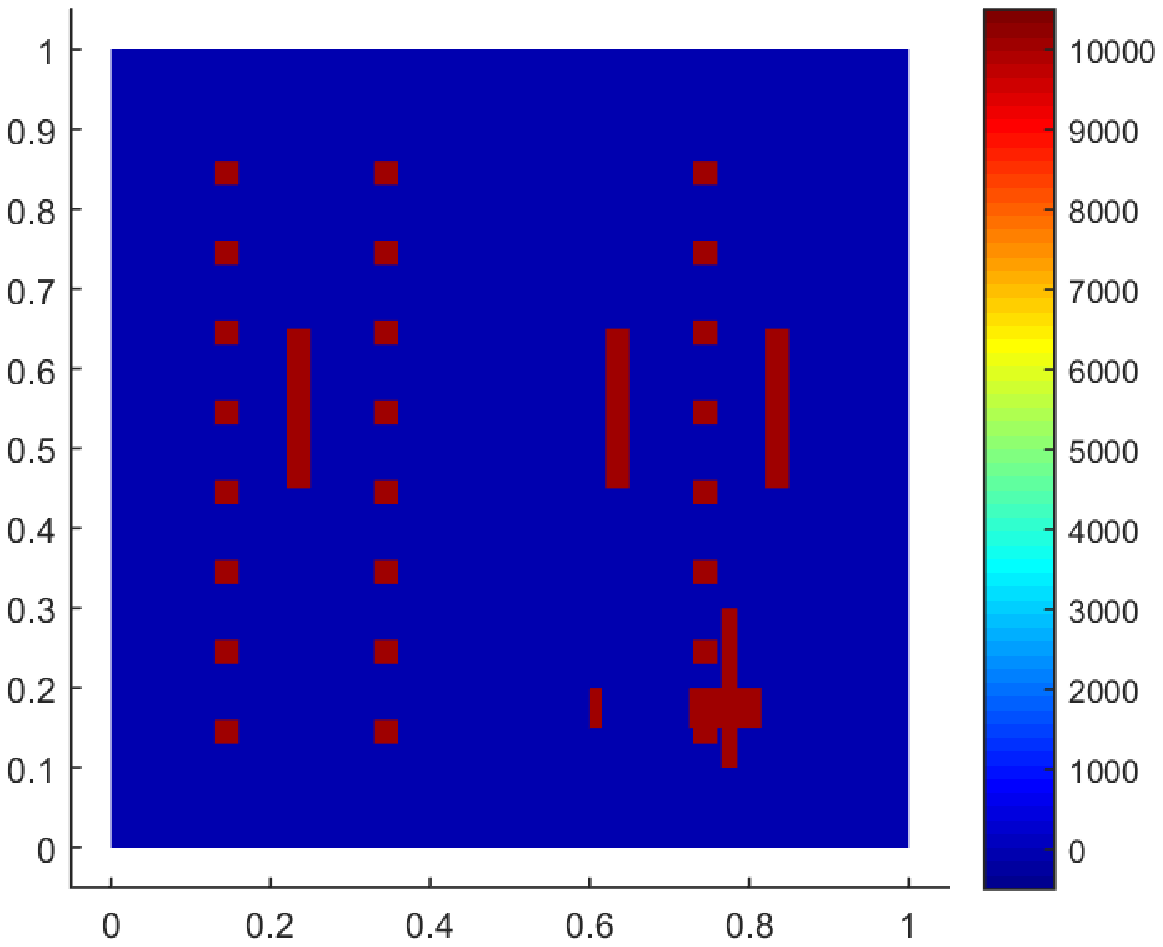}}
\subfigure[$ \kappa_{2}$]{\includegraphics[width=2.5in, height=2in]{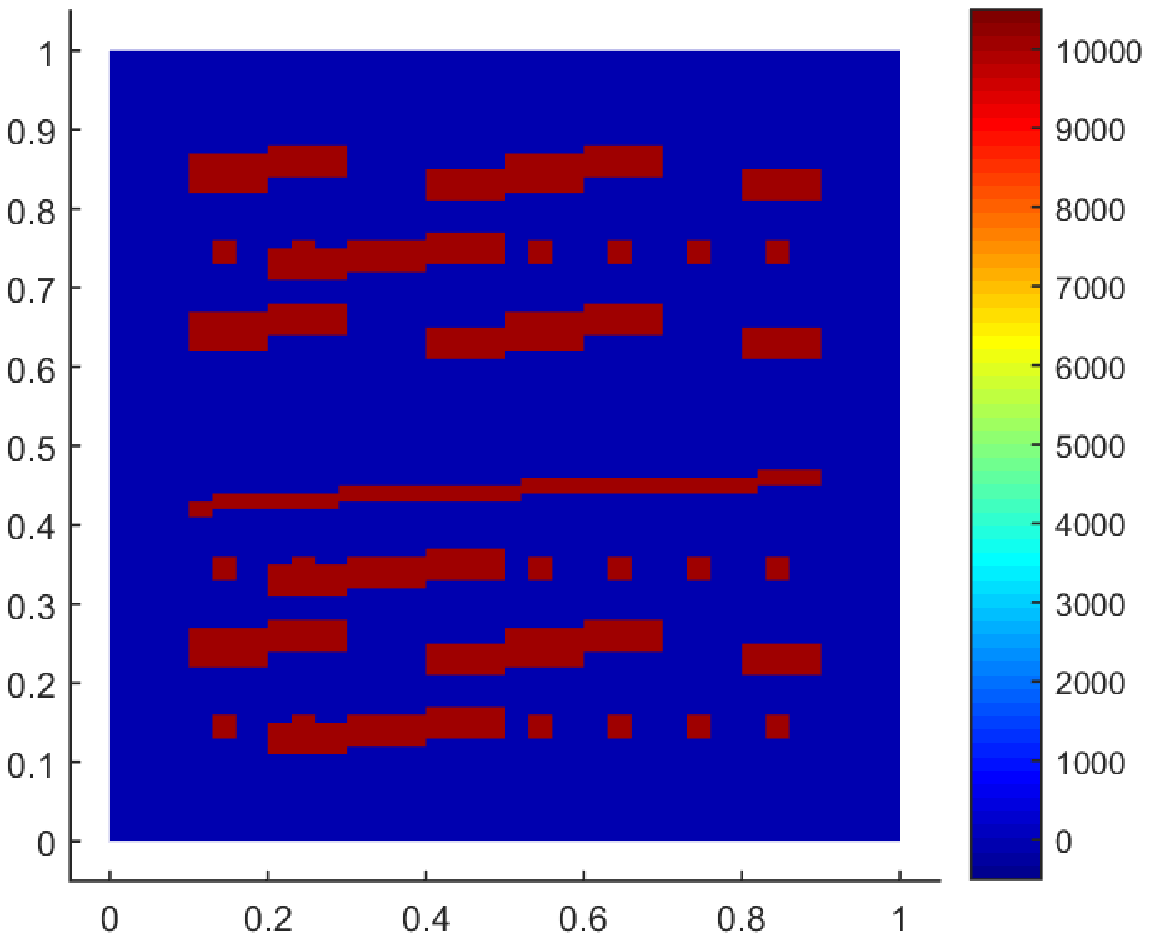}}
\caption{ Two permeability fields.}
\label{cof}
\end{figure}

To measure the approximation accuracy,  the relative errors between reference solution and multiscale method are utilized and defined as follows,
\[
\varepsilon=\frac{\|u_{h}(x,T)-u_{ms}(x,T)\|_{L^{2}(\Omega)}}{\|u_{h}(x,T)\|_{L^{2}(\Omega)}},
\]
\[
\varepsilon_{a}=\frac{\|u_{h}(x,T)-u_{ms}(x,T)\|_{a(\Omega)}}{\|u_{h}(x,T)\|_{a(\Omega)}},
\]
where $u_{h}$ is the fine-scale FEM solution in fine grid and $u_{ms}$ is  CEM-GMsFEM solution.

\subsection{Numerical results for CEM-GMsFEM}\label{s1}
In this subsection, we present the numerical results using CEM-GMsFEM to solve the equation (\ref{ex-eq}) with different permeability fields. In  Example 1, the permeability field $\kappa_1$ depicted in Figure \ref{cof} (left) will be used. The contrast ratio   is $10^4$ in the permeability field $\kappa_1$. In the $V_{ms}$ space, the number of oversampling layers used is  $4*(\log(H)/\log(1/10))$.
In Figure \ref{errtu1}, we present the results for the first example using different numbers of local basis functions when fixing  the coarse grid size $H=1/10$. From the figure, we obtain two observations: (1) the error decreases as the number of local basis functions increases; (2) when the number of local basis functions exceeds a certain number, the error decreases very slowly as the eigenvalues of the local spectral problems  decay slowly. So we can get a good accuracy by using only a few local  basis functions on each coarse block. In this example, four local basis functions for each coarse block can achieve very good accuracy. Figure \ref{wutu1} shows the relative error $\varepsilon_{a}$ with different numbers of oversampling layers when we fix the coarse grid size $H=1/10$ and use four basis functions on each coarse block. By the figure, we can see  that as the number of oversampling layer increases, the approximation becomes more accurate and the error decreases very slowly once the number of oversampling layers attains  a certain number. To show the relationship between the error and the coarse grid size $H$,  Table \ref{exm-tab1} lists  the numerical results $(T=1,\Delta t=0.01)$ with different coarse grid sizes $H$. It shows that: (1) as the the coarse grid becomes finer, the error decreases; (2) the CEM-GMsFEM for the parabolic equation has a convergence rate  proportional to the coarse grid size $H$. Based on the above theoretical analysis, the error depends on $H$ and $\Lambda$. Different sizes of the coarse grid may result in  different $\Lambda$. Given a fixed  $\Lambda$,  the $L^{2}$ norm  error can achieve  a second order convergence rate and the energy norm error has a first order convergence rate. Figure \ref{exm-solu1} shows the fine-scale FEM solution and the CEM-GMsFEM solution at times $T=1$, which show almost the same solution profile.

\begin{figure}
  \centering
  \includegraphics[width=3in]{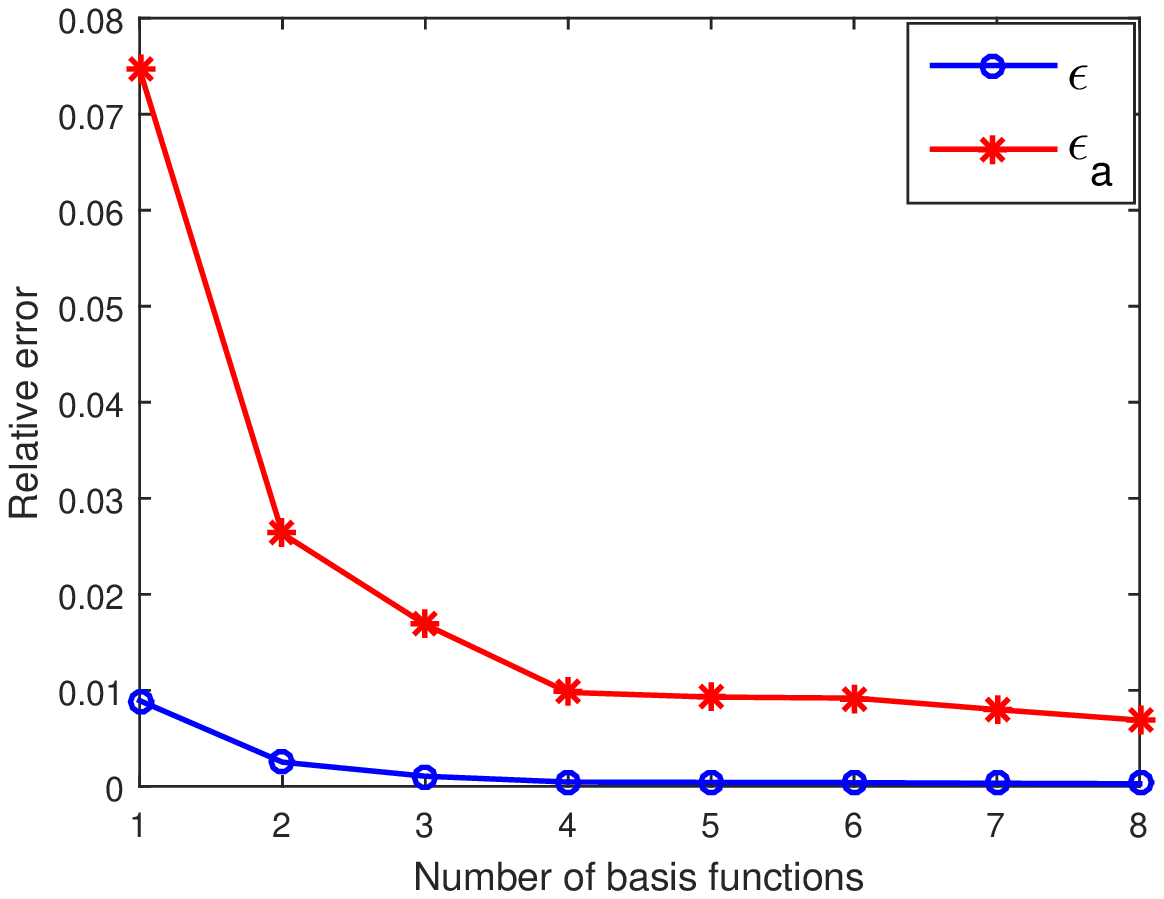}
  \caption{Numerical result with different  numbers of local basis functions (Example 1)}\label{errtu1}
\end{figure}

\begin{figure}
  \centering
  \includegraphics[width=3in]{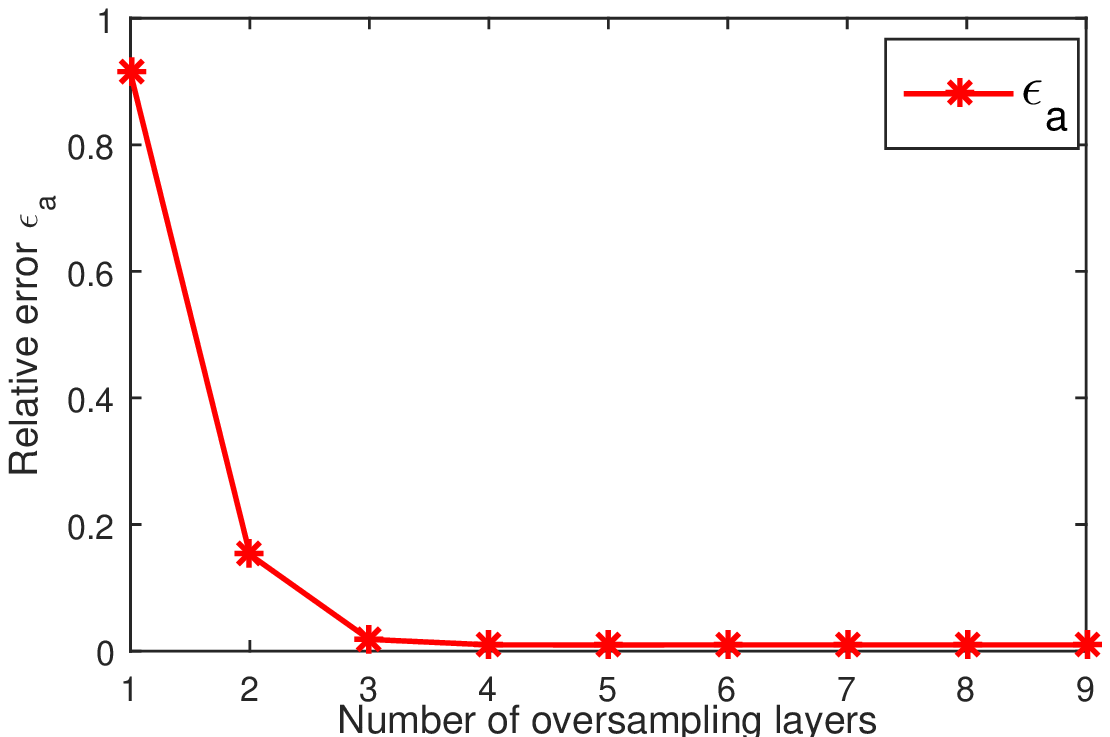}
  \caption{Numerical result with different numbers of oversampling layers (Example 1)}\label{wutu1}
\end{figure}

\begin{table}[hbtp]
\centering
\caption{Numerical result with varying coarse grid size $H$ and oversampling coarse layers $m$ for  Example 1 $(\kappa=\kappa_{1})$}
\vspace{2pt}
\begin{tabular}{|c|c|c|c|c|}
  \hline
   Basis \# per element & $H$ & $m$  & $\varepsilon$& $\varepsilon_{a}$ \\
  \hline
  $4$ & $1/10$ & $4$ & $4.5137E-04$& $9.7892E-03$  \\
  \hline
  $4$ & $1/20$ &$6$  & $1.1483E-04$ & $3.7640E-03$\\
  \hline
  $4$ & $1/40$ & $7$ & $1.7609E-05$& $1.5141E-03$ \\
  \hline
\end{tabular}
\label{exm-tab1}
\end{table}

\begin{figure}
  \centering
  \includegraphics[width=4in]{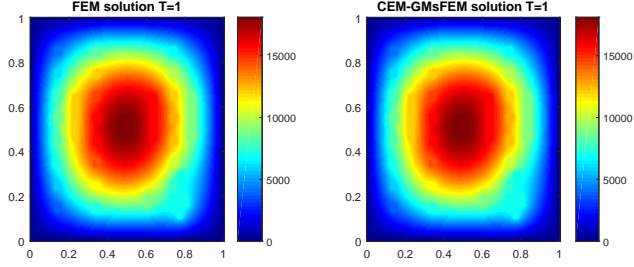}
  \caption{The left is the reference solution, the right is the CEM-GMsFEM solution.  $H=\frac{1}{10}$,  $4$  oversampling layers and  $4$ local multiscale basis functions,  $T=1$}\label{exm-solu1}
\end{figure}

In the second example, we use the same equation but with a different permeability field $\kappa_{2}$ shown in Figure \ref{cof} (right) and the contrast is still $10^{4}$.  The fine grid size, the coarse grid size and the oversampling layer are the same as the previous example. In Figure \ref{errtu2}, we present the results for using different number of local basis functions when we fix the coarse grid size $H=1/10$. When increasing  the number of basis functions on each  coarse grid, the error will decrease. As the same as  Example 1,  four basis functions for each coarse block can give a very good approximation. In Figure \ref{wutu2}, the relative error $\varepsilon_{a}$ with different numbers of oversampling layers is given. Table \ref{exm-tab2} shows the numerical results with different coarse grid sizes $H$.

\begin{figure}
  \centering
  \includegraphics[width=3in]{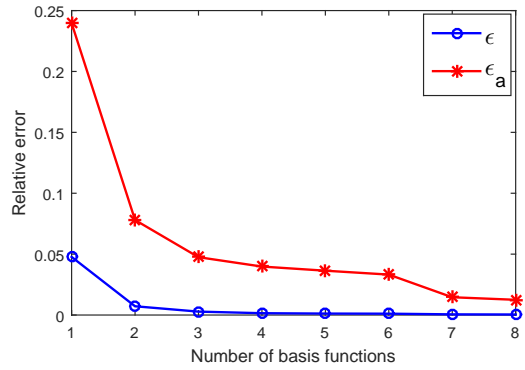}
  \caption{Numerical result with different  numbers of basis functions (Example 2)}\label{errtu2}
\end{figure}

\begin{figure}
  \centering
  \includegraphics[width=3in]{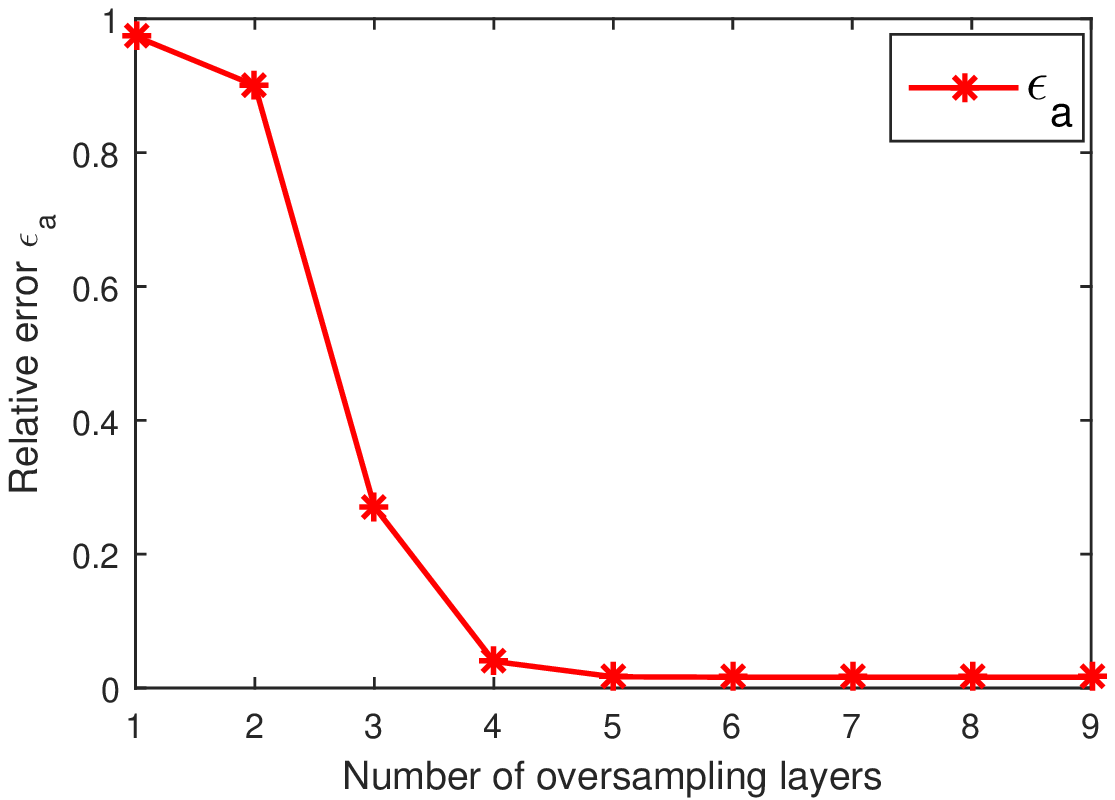}
  \caption{Numerical result with different  numbers of oversampling layers (Example 2)}\label{wutu2}
\end{figure}

\begin{table}[hbtp]
\centering
\caption{Numerical result with different  coarse grid sizes $H$  and oversampling coarse layers $m$ for  Example 2 $(\kappa=\kappa_{2})$}
\vspace{2pt}
\begin{tabular}{|c|c|c|c|c|}
  \hline
   Basis \# per element & $H$ & $m$  & $\varepsilon$ & $\varepsilon_{a}$\\
  \hline
  $4$ & $1/10$ & $4$ & $1.5300E-03$ & $3.9793E-02$\\
  \hline
  $4$ & $1/20$ &$6$  & $2.5246E-04$& $7.2485E-03$  \\
  \hline
  $4$ & $1/40$ & $7$ & $3.3741E-05$ & $2.7634E-03$\\
  \hline
\end{tabular}
\label{exm-tab2}
\end{table}

\subsection{Application to a fractured medium}\label{s2}
In this subsection, we consider a single phase flow in a fracture porous medium.  The permeability value in fracture is  much  larger  than that in the surrounding medium. We use the discrete fracture model (DFM) introduce in \cite{frac1} and apply CEM-GMsFE to the fracture model. The main idea is to treat the fracture as interface between subdomains. Thus, the spatial domain is divided into  matrix and  fractures, where the matrix is  two dimensional and the fractures are one dimensional, i.e.,
\[
\Omega=\Omega_{m}\cup(\cup_{i}\Omega_{\f,i}),
\]
where the subscript $m$ represents the matrix regions and $\f$ represents the fracture regions. So we can rewrite the weak formulation  corresponding to (\ref{ex-eq}) as
\begin{eqnarray}\label{fracture}
\begin{split}
  &\int_{\Omega}u_{t}\cdot v+\int_{\Omega}\kappa \nabla u \cdot \nabla v \\
  &=\int_{\Omega_{m}}u_{t}\cdot v+\sum_{i}\int_{\Omega_{\f,i}}u_{t}\cdot v
  +\int_{\Omega_{m}}\kappa \nabla u \cdot \nabla v+\sum_{i}\int_{\Omega_{\f,i}}\kappa \nabla u \cdot \nabla v\\
  &=\int_{\Omega}fv.
\end{split}
\end{eqnarray}
Using  CEM-GMsFEM, we take into account the fracture distributions in constructing the auxiliary space and solving the local minimizing problems.
 We show the computation domain and fracture  in  Figure \ref{frac1}. In the fracture domain, the permeability is $10^4$. In the matrix domain, the permeability is $1$. In the discrete fracture model, we use the constraint energy minimizing in the $V_{ms}$ space. We show the numerical solution in $(T=1,\Delta t=0.01 )$ in Figure \ref{frac-solu}. In this figure, the domain has three fractures and the reference solution is computed by the FEM on $160\times160$ fine grid. We use four local multiscale  basis functions on each  coarse block  and the number of  oversampling layers is 4 to compute the solution of CEM-GMsFEM.
 The figure demonstrates that the CEM-GMsFEM gives  a good  approximation  solution for the case of the  fracture porous medium.
Table \ref{frac-reu} gives the numerical results $(T=1,\Delta t=0.01)$ with different coarse grid sizes $H$. By this table, we observe that the error significantly  decreases as the size of the coarse grid gets finer.

\begin{figure}
  \centering
  \includegraphics[width=2in,height=2in]{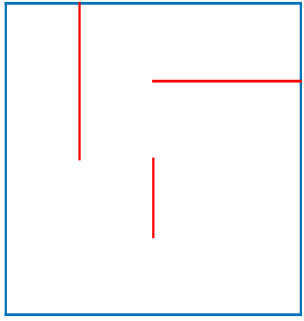}
  \caption{Two-dimensional matrix $\Omega_{m}$ with one-dimensional fracture $\gamma$}\label{frac1}
\end{figure}

\begin{figure}
  \centering
  \includegraphics[width=4in]{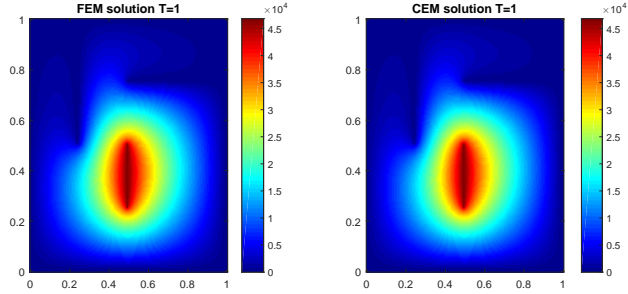}
  \caption{The left is the reference  solution, the right is the CEM-GMSFEM solution.  $H=\frac{1}{8}$, the oversampling layers is 4 and using  4 local multiscale  basis function. $(T=1)$}\label{frac-solu}
\end{figure}

\begin{table}[hbtp]
\centering
\caption{Numerical result with varying coarse grid size $H$ and oversampling coarse layers $m$}
\vspace{2pt}
\begin{tabular}{|c|c|c|c|c|}
  \hline
   Basis \# per element & $H$ & $m$  & $\varepsilon$& $\varepsilon_{a}$  \\
  \hline
  $4$ & $1/4$ & $3$ & $ 2.5751E-02$& $1.4806E-02$  \\
  \hline
  $4$ & $1/8$ &$4$  & $7.1869E-03$ & $5.6841E-03$\\
  \hline
  $4$ & $1/16$ & $5$ & $2.0709E-03$& $2.1280E-03$ \\
  \hline
\end{tabular}
\label{frac-reu}
\end{table}


\subsection{A posteriori error bound}\label{s3}
In this section, we present the numerical results to confirm the posteriori error bound in Theorem \ref{thmpost}. For simplicity of presentation, we define the  notations as follows
\[
\varepsilon_{L}:=\|(u-u_{ms})(T)\|^{2}+\int_{0}^{T}\|u-u_{ms}\|_{a}^{2},\quad \varepsilon_{R}:=\sum_{n}\sum_{i}\|R_{i}^{n}\|_{Q^{*}}^{2}+\|(u-u_{ms})(0)\|^{2},
\]
where $u$ is the reference solution and $u_{ms}$ is the CEM-GMsFEM solution.
In order to compute $\|R_{i}^{n}\|_{Q^{*}}$, we solve the problem: find $\phi_{i}^{n}\in V_{0}(\omega_{i})$ such that
\[
  a(\phi_{i}^{n},v)=(f,v)-(\frac{u_{ms}^{n+1}-u_{ms}^{n}}{\Delta t},v)-a(u_{ms}^{n+1},v),\quad \forall v\in V_{0}(\omega_{i}).
\]
Thus,
\[
  \| R_{i}^{n}\|_{Q^{*}}=(\Delta t)^{\frac{1}{2}}\|\phi_{i}^{n}\|_{a}.
\]

We consider the parabolic equation (\ref{ex-eq}) with the homogeneous Dirichlet boundary condition and zero  initial condition, the source term
\[
f(x,t)=t^{2}+(x_{1}+x_{2})^{2},\quad x\in[0,1]^{2}.
\]
In the example, we take the permeability field $\kappa_{2}$  depicted in Figure \ref{cof} (right). We use $200\times 200$ fine grid in the spatial domain  and the time step $\Delta t =0.01$. Due to the eigenvalue problem has three small eigenvalues in each coarse cell, we will use three multiscale  basis functions in each coarse cell. Table \ref{posterr} shows the numerical result with different coarse grid sizes. By the table, we observe that $\sum_{n}\sum_{i}\|R_{i}^{n}\|_{Q^{*}}^{2}+\|(u-u_{ms})(0)\|^{2}$ is a suitable a posteriori error estimate and the ratio $\frac{\varepsilon_R}{\varepsilon_L}$ keeps stable as coarse grid size varies.
To see how the local permeability structure effects on the local residual of CEM-GMsFEM, we arrange the local residuals $\{\Sigma_{n}\|R_{i}^{n}\|_{Q^*}^{2}\}_{i=1}^{N_{c}}$ in  ascending order, where $N_{c}$ is the number of coarse vertices. When  $N_{c}=121$, we plot  the permeability profile  in some regions $\omega_{i}$ and compute   $\Sigma_{n}\|R_{i}^{n}\|_{Q^*}^{2}$ in these local regions in Figure \ref{cofNx4}.
For $N_{c}=441$, we plot the local permeability profiles in some region $\omega_{i}$ in Figure \ref{cofNx8} and compute the value of $\Sigma_{n}\|R_{i}^{n}\|_{Q^*}^{2}$ in these regions.
In Figure \ref{cofNx4}, we observe that  the local permeability map $\kappa(\omega_{25})$  in the middle plot  is more complex than the right map $\kappa(\omega_{39})$, but the local residual $\Sigma_{n}\|R_{25}^{n}\|_{Q^*}^{2}$ is much smaller than $\Sigma_{n}\|R_{39}^{n}\|_{Q^*}^{2}$.
 In Figure \ref{cofNx8}, the  middle plot $\kappa(\omega_{71})$ is much more simple  than the left plot $\kappa(\omega_{68})$, but the local residual $\Sigma_{n}\|R_{71}^{n}\|_{Q^*}^{2}$ is larger than  $\Sigma_{n}\|R_{68}^{n}\|_{Q^*}^{2}$.  These results imply that the value of the local residual may not depend on the local pattern of the medium, but depends on the nonlocal structure of the medium.

\begin{table}[hbtp]
\centering
\caption{A posteriori error bound}
\vspace{2pt}
\begin{tabular}{|c|c|c|c|c|}
  \hline
    $H$ & oversampling layers  & $\varepsilon_{L}$ & $\varepsilon_{R}$  &   $\frac{\varepsilon_R}{\varepsilon_L}$   \\
  \hline
  $1/10$ & $4$& $8.4905E-05$ & $3.1505E-04$&$3.7107$\\
  \hline
 $1/20$ &$6$  & $3.5436E-06$& $1.1161E-05$ &$3.1496$ \\
  \hline
  $1/40$ & $7$ & $ 3.8295E-07 $ & $1.1989E-06$ &$ 3.1306$\\
  \hline
\end{tabular}
\label{posterr}
\end{table}

\begin{figure}[htbp]
\centering
\subfigure[\tiny $\Sigma_{n}\|R_{14}^{n}\|_{Q^*}^{2}=7.3808E-08$.]{\includegraphics[width=1.6in, height=1.7in,angle=0]{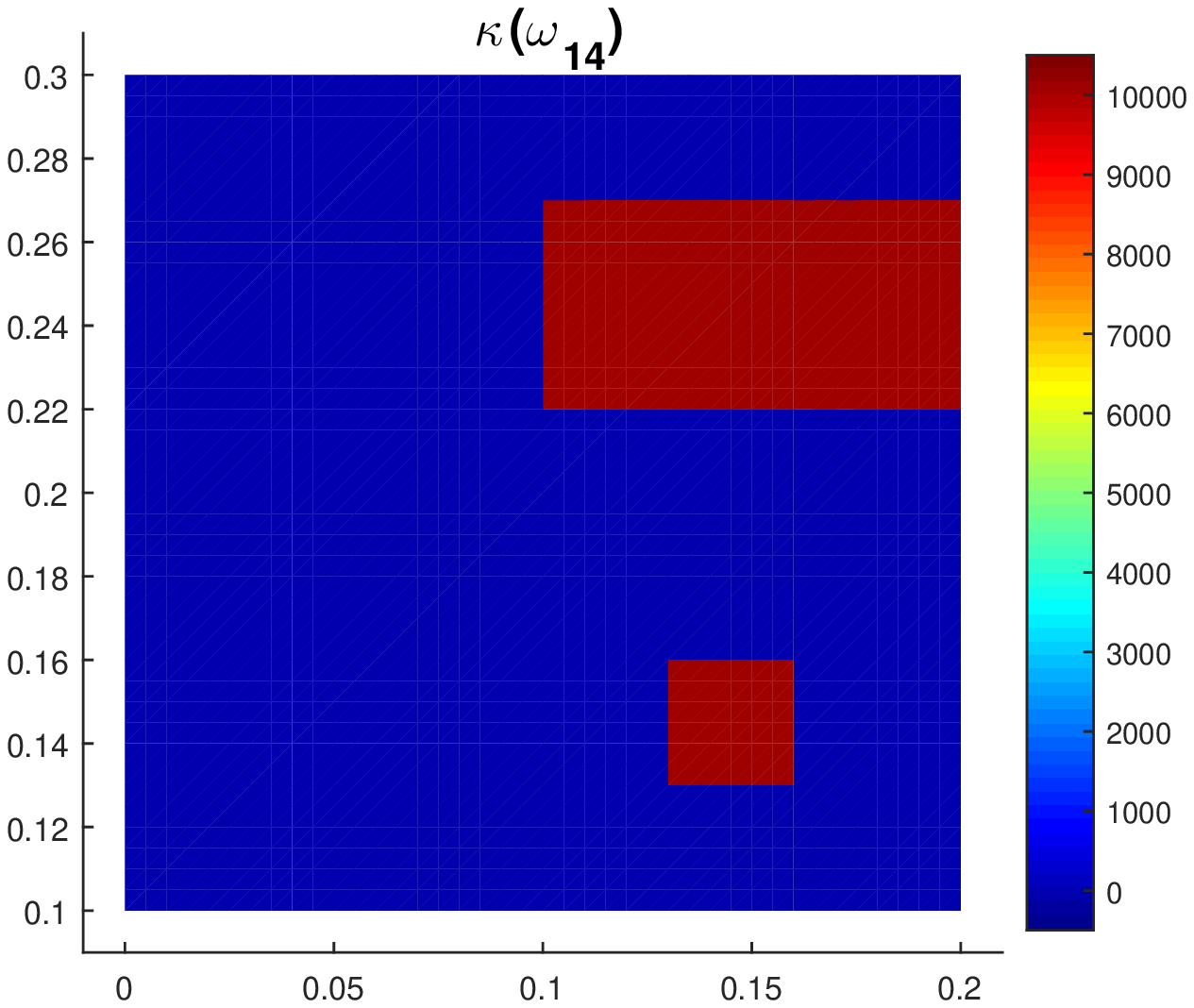}}
\subfigure[\tiny $\Sigma_{n}\|R_{25}^{n}\|_{Q^*}^{2}=1.1923E-07$.]{\includegraphics[width=1.6in, height=1.7in,angle=0]{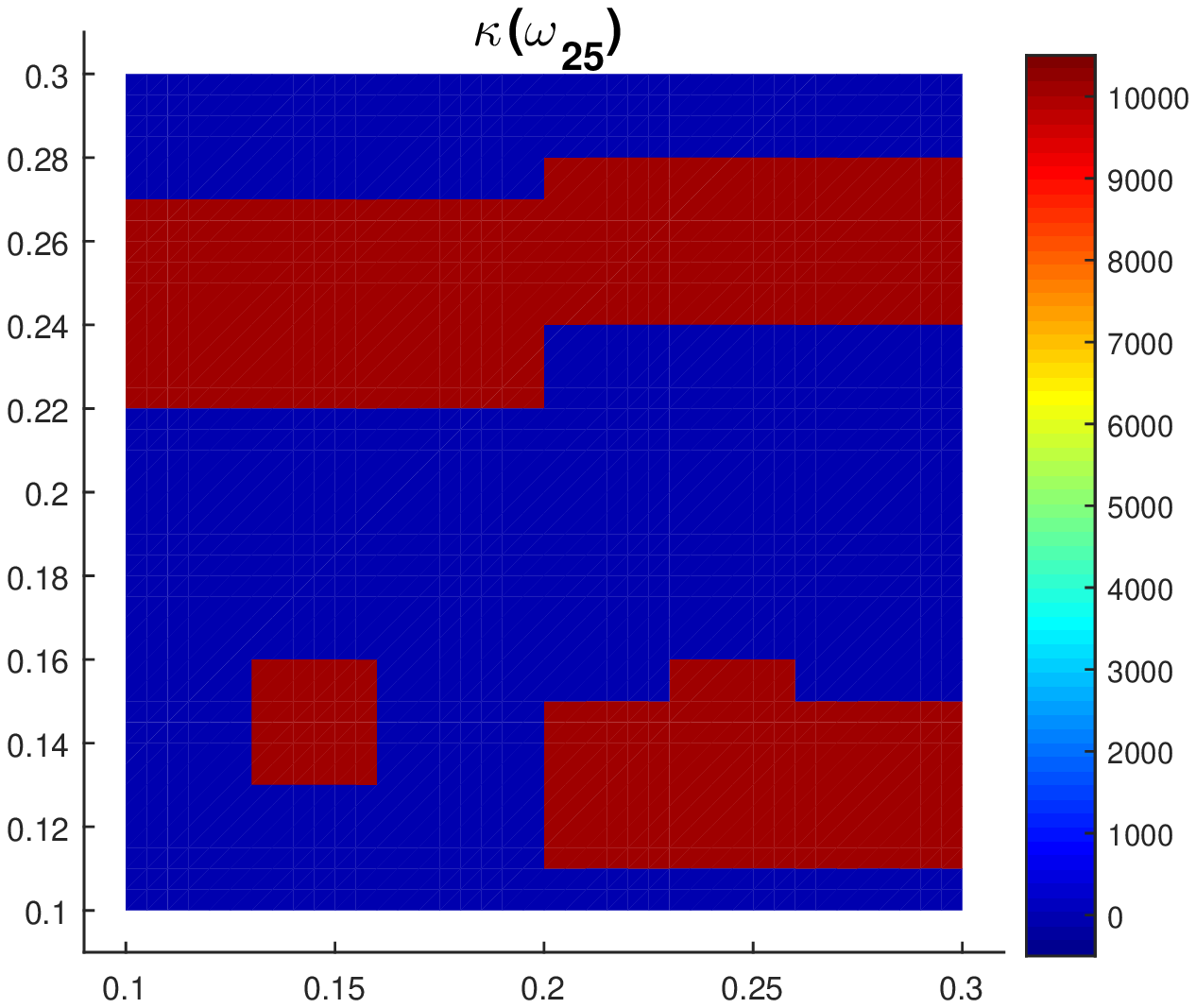}}
\subfigure[\tiny $\Sigma_{n}\|R_{39}^{n}\|_{Q^*}^{2}=1.2293E-05$.]{\includegraphics[width=1.6in, height=1.7in,angle=0]{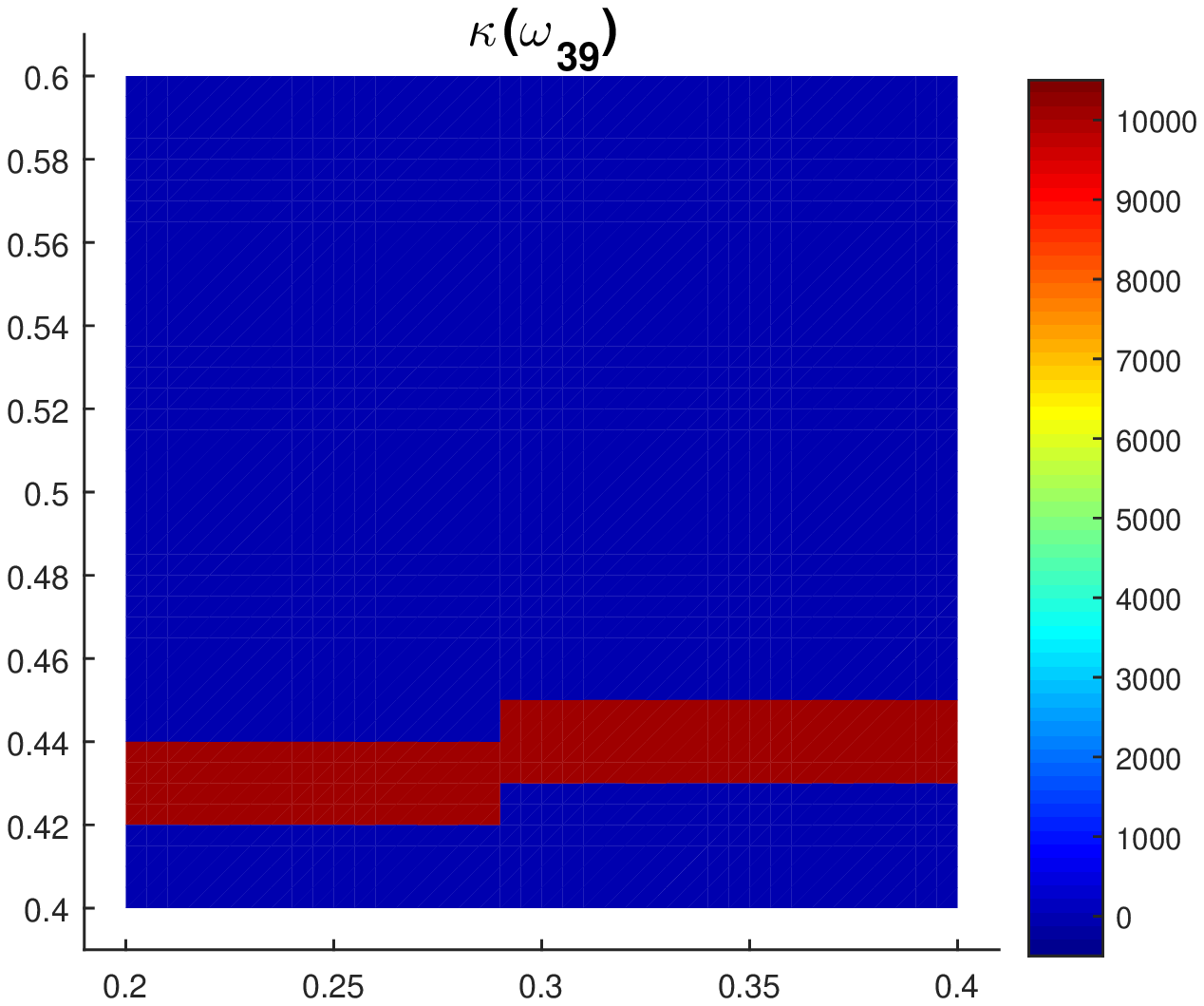}}
\caption{Some  local permeability fields ($N_{c}=121, H=1/10$).}
\label{cofNx4}
\end{figure}

\begin{figure}[htbp]
\centering
\subfigure[\tiny $\Sigma_{n}\|R_{68}^{n}\|_{Q^*}^{2}=4.9114E-09$ .]{\includegraphics[width=1.6in, height=1.7in]{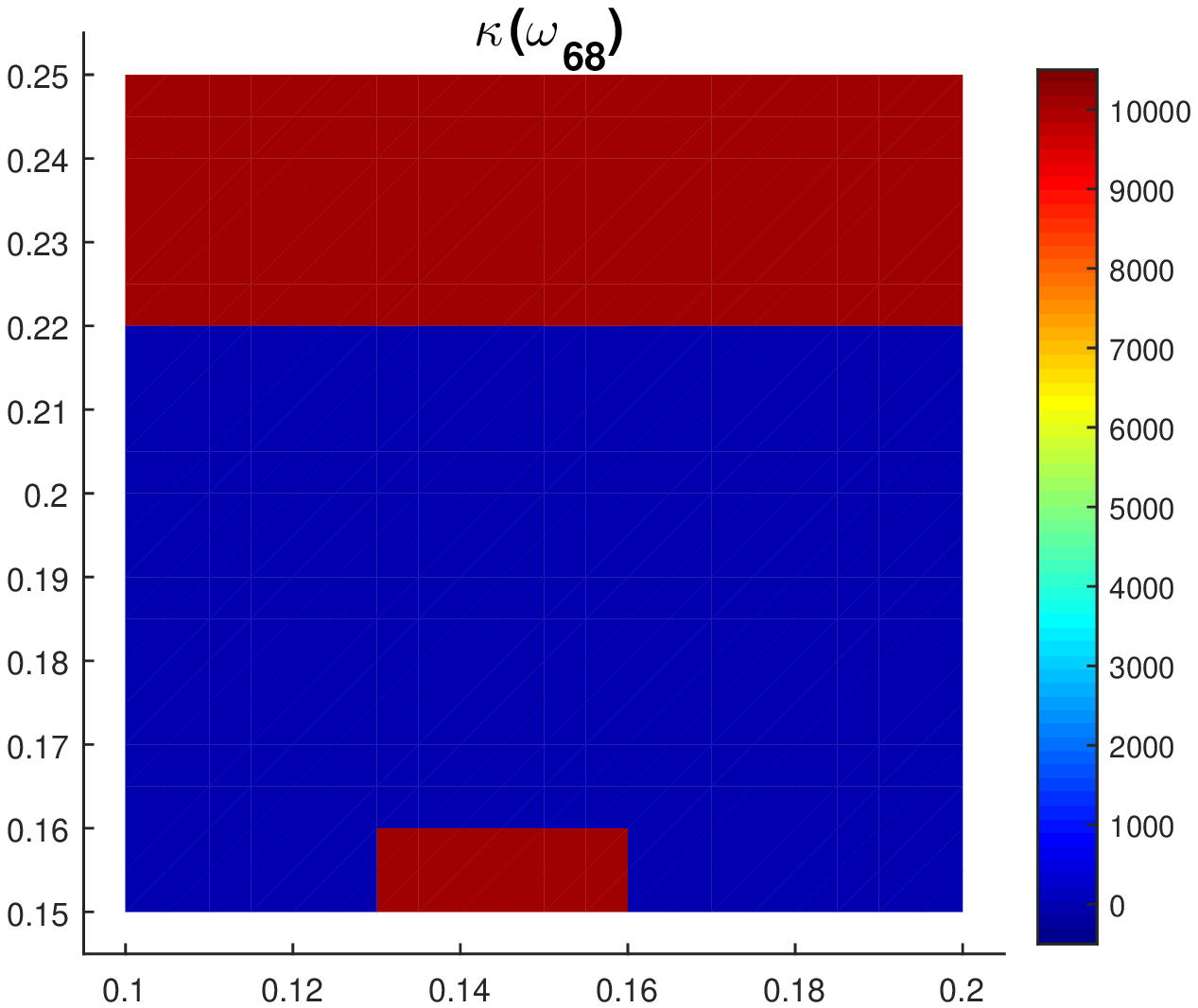}}
\subfigure[\tiny $\Sigma_{n}\|R_{71}^{n}\|_{Q^*}^{2}=1.4072E-08$.]{\includegraphics[width=1.6in, height=1.7in]{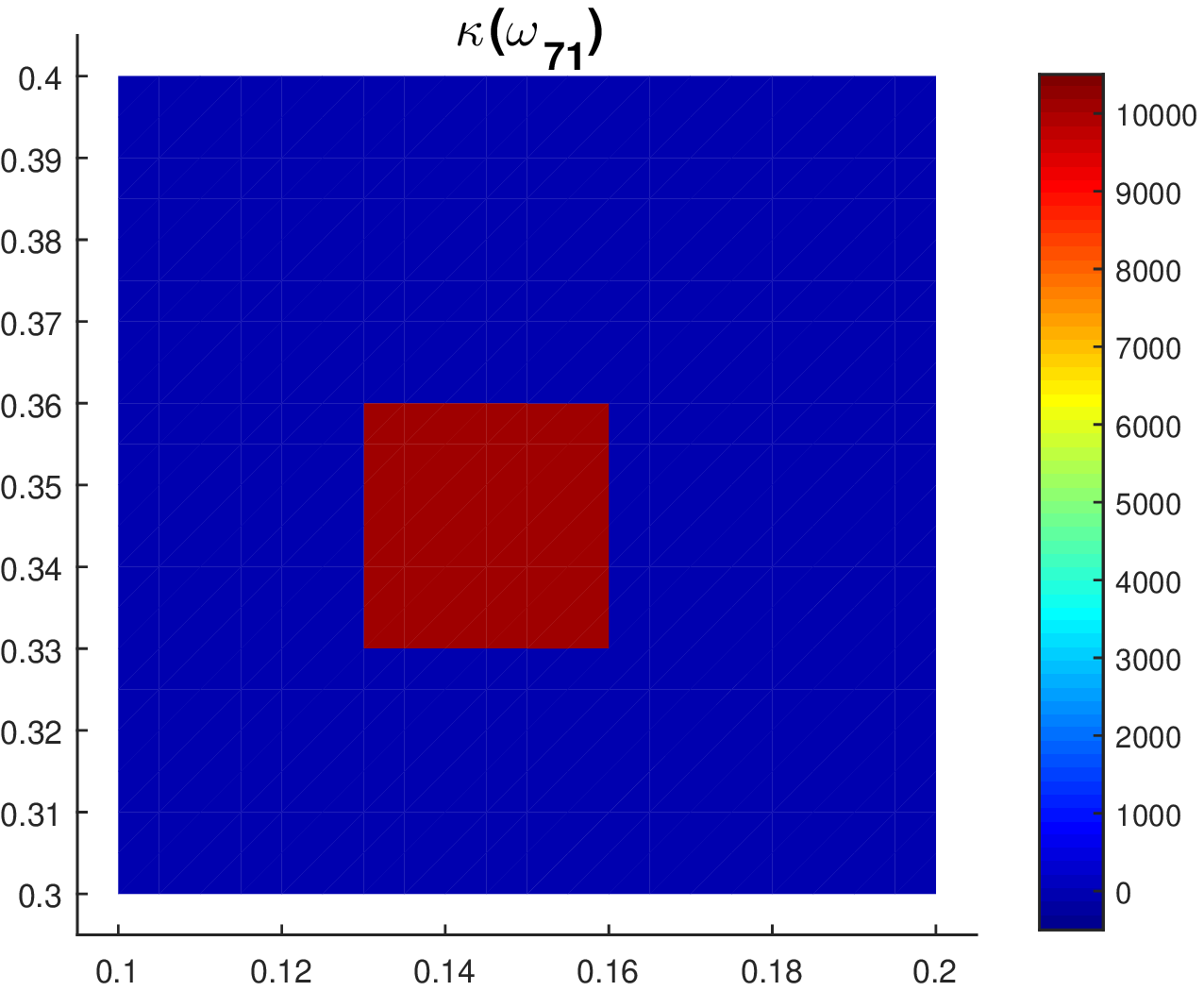}}
\subfigure[\tiny $\Sigma_{n}\|R_{246}^{n}\|_{Q^*}^{2}=1.0134E-07$.]{\includegraphics[width=1.6in, height=1.7in]{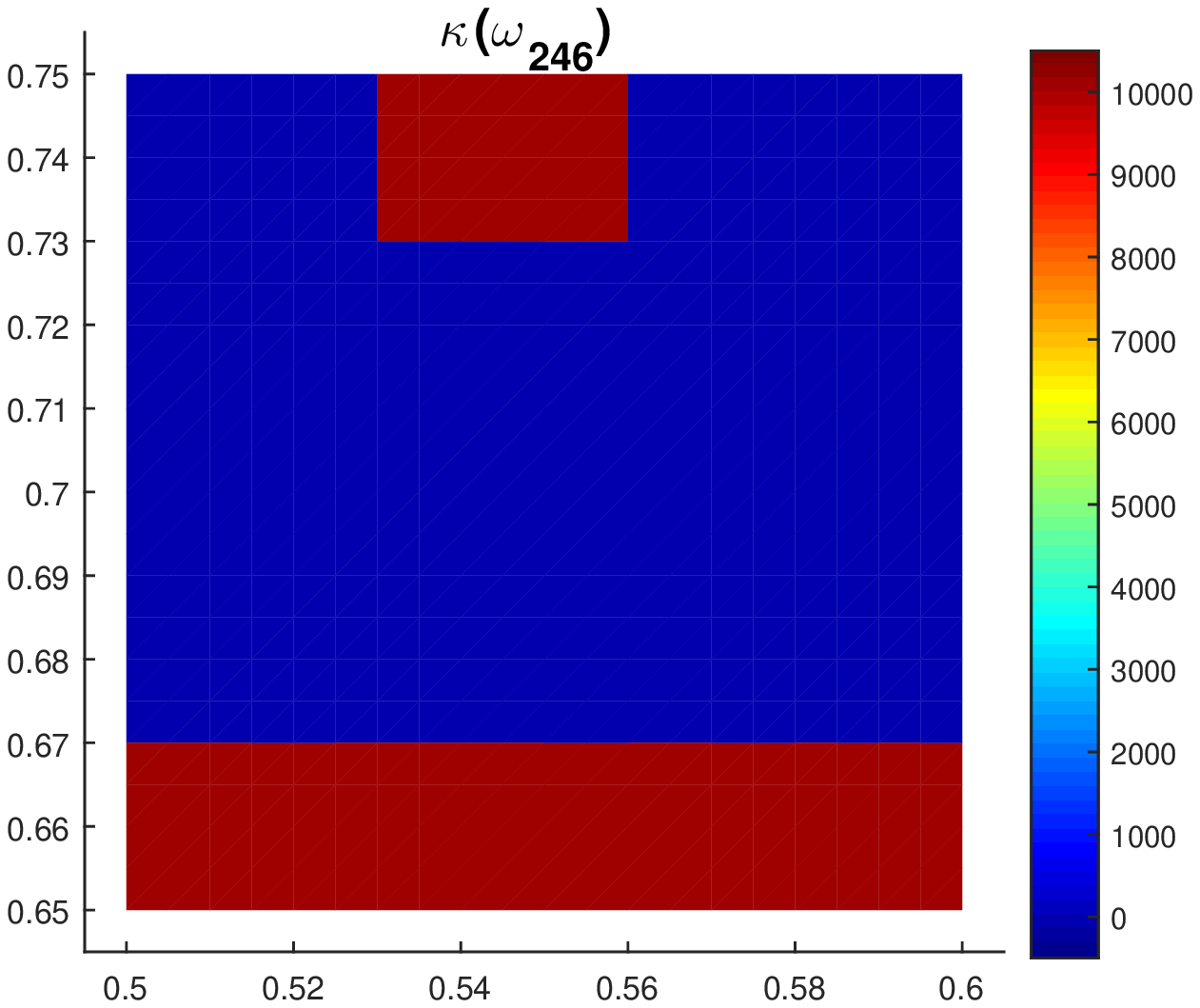}}
\caption{Some local permeability fields ($N_{c}=441, H=1/20$).}
\label{cofNx8}
\end{figure}


\section{Conclusion}
In this paper, we have presented  CEM-GMsFEM for solving the parabolic problems in multiscale porous  media. The analysis   showed that  the convergence depends on the coarse mesh size and the decay of eigenvalues of local spectral problems. For CEM-GMsFEM, the first step is to construct the auxiliary space through solving  local spectral  problems. The second step is to  solve a constraint energy minimizing problem to construct the multiscale basis functions in the oversampling local regions.  In this paper, we have constructed the elliptic projection in the space spanned  by  CEM-GMSFEM basis functions for convergence analysis.  Then we showed  the convergence of the semidiscrete formulation. The method can achieve the  second order convergence rate in the $L^{2}$ norm and first order convergence rate in the energy norm with respect to coarse mesh size. We also  derived the convergence and stability of the full discrete formulation.  A posteriori error bound was  given and the numerical results showed that it is a proper  upper bound.
 Some numerical results have been presented to  confirm the  theoretical results.



\end{document}